\numberwithin{equation}{section}
\begin{document}
\def\e#1\e{\begin{equation}#1\end{equation}}
\def\ea#1\ea{\begin{align}#1\end{align}}
\def\eq#1{{\rm(\ref{#1})}}
\theoremstyle{plain}
\newtheorem{thm}{Theorem}[section]
\newtheorem{lem}[thm]{Lemma}
\newtheorem{prop}[thm]{Proposition}
\newtheorem{cor}[thm]{Corollary}
\theoremstyle{definition}
\newtheorem{dfn}[thm]{Definition}
\newtheorem{example}[thm]{Example}
\newtheorem{remark}[thm]{Remark}
\newtheorem{conjecture}{Conjecture}

\newtheorem*{mainthmA}{{\bf Theorem A}}
\newtheorem*{maincorC}{{\bf Corollary C}}
\newtheorem*{mainthmB}{{\bf Theorem B}}
\newtheorem*{mainthmD}{{\bf Theorem D}}



\newcommand{\bP}{\mathbb{P}}
\newcommand{\der}{\mathrm{d}}
\newcommand{\area}{\ensuremath \mathrm{Area}}
\newcommand{\fc}{\ensuremath F^{\mathrm{cyc}}}
\newcommand\cut{\operatorname{cut}}
\newcommand\glue{\operatorname{glue}}
\newcommand\form[1]{\langle #1\rangle}
\newcommand{\PP}{\mathbb{P}}
\newcommand{\PO}{\mathbb{P}^1_{a,b,c}}
\newcommand{\orbP}{\mathbb{P}^1_{a,b,c}}
\newcommand{\C}{\mathbb{C}}
\newcommand{\R}{\mathbb{R}}
\newcommand{\Q}{\mathbb{Q}}
\newcommand{\bH}{\mathbb{H}}

\newcommand{\BL}{\mathcal{L}}

\newcommand{\HH}{\mathbb{H}^2}
\newcommand{\DD}{\mathbb{D}^2}
\newcommand{\kk}{\boldsymbol{k}}
\newcommand{\Z}{\mathbb{Z}}
\newcommand{\nat}{\mathbb{N}}
\newcommand{\pd}{\partial}
\newcommand{\NN}{\mathbb{N}}
\newcommand{\AI}{A_\infty}
\newcommand{\LI}{L_\infty}
\newcommand{\CP}{\mathcal{P}}
\newcommand{\RP}{\mathbb{R}P}
\newcommand{\CE}{\mathcal{E}}
\newcommand{\CF}{\mathcal{F}}
\newcommand{\CO}{\mathcal{O}}
\newcommand{\cM}{\mathcal{M}}
\newcommand{\CA}{\mathcal{A}}
\newcommand{\CL}{\mathcal{L}}
\newcommand{\CH}{\mathcal{H}}
\newcommand{\CG}{\mathcal{G}}
\newcommand{\CJ}{\mathcal{J}}
\newcommand{\CS}{\mathcal{S}}
\newcommand{\OM}{\Omega^\bullet(M)}
\newcommand{\OBL}{\Omega^\bullet(L)}
\newcommand{\bx}{\boldsymbol{x}}
\newcommand{\by}{\boldsymbol{y}}
\newcommand{\bX}{\boldsymbol{X}}
\newcommand{\bi}{\mathbf{i}}
\newcommand{\E}{\epsilon}
\newcommand{\GG}{\Gamma}
\newcommand{\ds}{\displaystyle}
\newcommand{\WH}[1]{\widehat{#1}}
\newcommand{\WT}[1]{\widetilde{#1}}
\newcommand{\OL}[1]{\overline{#1}}
\newcommand{\CR}{\textit{crit}}
\newcommand\cay{\operatorname{Cay}}
\newcommand{\Jac}{\mathrm{Jac}}
\newcommand{\orb}{\mathrm{orb}}

\newcommand{\one}{\mathbb{1}}

\newcommand{\Span}{\mathrm{Span}}

\newcommand{\largewedge}{\mbox{\Large $\wedge$}}
\newcommand{\cL}{\mathcal{L}}

\newcommand{\even}{\mathrm{even}}
\newcommand{\odd}{\mathrm{odd}}

\makeatletter
\newcommand{\subjclass}[2][2010]{%
  \let\@oldtitle\@title%
  \gdef\@title{\@oldtitle\footnotetext{#1 \emph{Mathematics subject classification.} #2}}%
}
\newcommand{\keywords}[1]{%
  \let\@@oldtitle\@title%
  \gdef\@title{\@@oldtitle\footnotetext{\emph{Key words and phrases.} #1.}}%
}
\makeatother

\title{\bf Modularity of Open Gromov-Witten Potentials of Elliptic Orbifolds}
\subjclass{14N35, 14N10, 11FXX}
\author{ Siu-Cheong Lau and Jie Zhou}
\date{}

\maketitle

\begin{abstract}
We study the modularity of the genus zero open Gromov-Witten potentials and its generating matrix factorizations for elliptic orbifolds. These objects constructed by Lagrangian Floer theory are a priori well-defined only around the large volume limit. It follows from modularity that they can be analytically continued over the global K\"ahler moduli space.
\end{abstract}



\section{Introduction}

The mirror of an elliptic $\bP^{1}$ orbifold $\bP^1_{a,b,c}$ is a Landau-Ginzburg mirror: it is determined by a polynomial
\begin{equation}\label{eqmirrorcurvefamily}
 W^{\textrm{mir}} = x^a + y^b + z^c + \sigma xyz \,,
\end{equation}
where $\sigma$ is a complex parameter.  Mirror symmetry asserts that symplectic geometry of $\bP^1_{a,b,c}$ is reflected from the complex geometry of $W^{\textrm{mir}}$, and vice versa.  While the orbifold $\bP^1_{a,b,c}$ is only of dimension one, its Gromov-Witten theory is very interesting and receives a lot of attention in the context of mirror symmetry and integrable systems, see for instance \cite{milanov-tseng08, Ta, Rossi, Milanov:2011, Satake:2011, KrSh, Milanov:2012, ET, Li:2013, Shen:2014}.

The paper \cite{CHL} proposed a systematic construction of Landau-Ginzburg mirror and a homological mirror functor using Lagrangian Floer theory.  For an elliptic $\bP^1$ orbifold $\bP^1_{a,b,c}$, where $\frac{1}{a} + \frac{1}{b} + \frac{1}{c} = 1$, the construction produces a polynomial $W_q(x,y,z)$ whose coefficients are convergent series in the K\"ahler parameter $q$ of $\bP^1_{a,b,c}$. The polynomial $W_q$ can be rearranged to the form of $W^{\textrm{mir}}$ by an explicit change of coordinates in $(x,y,z)$.  It is called to be the open Gromov-Witten potential because it is obtained by counting holomorphic polygons bounded by a fixed Lagrangian, which is the Lagrangian immersion constructed by Seidel \cite{Seidel:g=2}.

The open Gromov-Witten potential $W_q(x,y,z)$ is a priori defined only around the point $q=0$, the so-called large volume limit of the K\"ahler moduli space.  In this paper, we show that indeed it can be extended to certain global moduli space:

\begin{thm} \label{thm:main}
Let $W_q(x,y,z)$ be the open Gromov-Witten potential of an elliptic $\bP^1$ orbifold $\bP^1_{a,b,c}$ where $(a,b,c) = (3,3,3)$ or $(2,4,4)$. The coefficients of $W_q(x,y,z)$, which are functions in $q$, are modular forms of certain weight $k$ for the
modular group $\Gamma = \Gamma(3)$ or $\Gamma(4)$ respectively. Hence the potential extends to be a section of
the line bundle $\mathcal{K}^{k\over 2}$ over the product $\C^3 \times \left(\Gamma\backslash \mathcal{H}^{*}\right)$, where $\mathcal{K}$ is the pull back of the canonical line bundle of the modular curve $\Gamma\backslash \mathcal{H}^{*}$.
\end{thm}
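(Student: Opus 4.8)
The plan is to reduce the statement to the classical modularity of theta functions by producing closed-form expressions for the coefficients of $W_q$. Recall from \cite{CHL} that each coefficient is a generating series $\sum_\beta \pm q^{A(\beta)}$, summed over the holomorphic polygons $\beta$ bounded by the Seidel Lagrangian that contribute to a fixed monomial $x^iy^jz^k$, where $A(\beta)$ is the symplectic area and the sign is the orientation/spin contribution. First I would set up the combinatorics of these polygons and organize them by combinatorial type, so that each coefficient becomes a finite combination of such area-generating series, and so that the problem becomes one of evaluating each series in closed form.

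The geometric heart of the argument is a lifting step. Because $\tfrac{1}{a}+\tfrac{1}{b}+\tfrac{1}{c}=1$, the orbifold $\bP^1_{a,b,c}$ is a global quotient $E/G$ of an elliptic curve $E=\C/\Lambda_\tau$ by a finite cyclic group $G$ (of order $3$ for $(3,3,3)$ and order $4$ for $(2,4,4)$), and under the mirror map the K\"ahler parameter is identified with $q=e^{2\pi i\tau}$ up to a rational power. I would lift the Seidel Lagrangian and the relevant polygons to the universal cover $\C$, where they become Euclidean triangles (respectively quadrilaterals) whose vertices are pinned to a lattice of lifted immersed points. In this picture a polygon of each combinatorial type is parametrized by integer lattice data $n\in\Z^r$, and its area $A$ is an inhomogeneous quadratic function of $n$; wrapping around the torus contributes multiples of the full torus area, which is proportional to $\mathrm{Im}(\tau)$. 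Consequently each area-generating series takes the shape $\sum_n \pm\, q^{\,Q(n)+\ell(n)+c}$ with $Q$ a positive-definite quadratic form, i.e. a theta series.

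Next I would identify these theta series on the nose with Jacobi theta constants $\vartheta[{p\atop p'}](0,\tau)$ (or their products and $\eta$-quotients) whose characteristics $(p,p')$ are the rational numbers dictated by the orbifold weights $a,b,c$. These theta constants are modular forms of weight $\tfrac12$ for a congruence subgroup, and the characteristics that arise are precisely those fixed by $\Gamma(3)$ (respectively $\Gamma(4)$); the classical transformation law then upgrades each coefficient to a genuine modular form for $\Gamma=\Gamma(3)$ or $\Gamma(4)$, with the weight $k$ read off from the number of theta and $\eta$ factors entering that coefficient. This simultaneously yields convergence and holomorphicity on all of $\mathcal{H}$, not merely near $q=0$. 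Finally, since weight-$k$ modular forms for $\Gamma$ are by definition holomorphic sections of $\mathcal{K}^{k/2}$ over the modular curve $\Gamma\backslash\mathcal{H}^{*}$ (using $\omega^{\otimes 2}\cong\mathcal{K}$ together with holomorphicity at the cusps), and the dependence on $(x,y,z)$ is polynomial, the potential $W_q$ extends to a global section of $\mathcal{K}^{k/2}$ over $\C^3\times(\Gamma\backslash\mathcal{H}^{*})$, as claimed.

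I expect the main obstacle to be the exact theta-function identification in the third step. Matching the signed count of holomorphic polygons to a closed-form theta constant requires controlling three delicate ingredients at once: the bijection between polygon types and lattice data, the precise quadratic form $Q$ governing the areas (so that the leading $q$-exponents match), and the orientation/spin signs, which determine the theta characteristics and whether one obtains $\vartheta_2,\vartheta_3,\vartheta_4$ or an $\eta$-quotient. Getting all of these to line up so that the combinatorially-defined series equals a classical modular form \emph{exactly}, rather than merely sharing its weight, is where the real work lies.
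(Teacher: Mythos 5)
Your overall architecture---obtain closed $q$-series for the coefficients, identify them with classical $\eta$/theta objects, invoke their transformation laws for $\Gamma(3)$ and $\Gamma(4)$, and then interpret weight-$k$ forms as sections of $\mathcal{K}^{k/2}$ over the modular curve---is the same as the paper's. But note that the paper does not re-derive the polygon enumeration at all: it quotes the explicit series for $\phi,\psi$ (case $(3,3,3)$) and $d_y,d_z,d_{yz}$ (case $(2,4,4)$) from \cite{CHKL}, and its own contribution is purely the arithmetic identification of those series with modular forms. Your first two steps (combinatorial types, lifting to the universal cover, lattice parametrization of polygons) reproduce work the paper treats as known input; that is legitimate but not where the theorem's proof lives.

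The genuine gap is your structural claim that each area-generating series has the shape $\sum_n \pm\, q^{Q(n)+\ell(n)+c}$ with $Q$ \emph{positive definite}, ``i.e.\ a theta series.'' This is true for $(3,3,3)$, where $\phi$ and $\psi$ are unary weighted theta series $\sum_{r\in\Z+\frac12}(-1)^{r+\frac12} r\, q^{cr^2}$, equal to $\eta$-cubes (weight $3/2$, with multiplier, via $\partial_v\theta_1|_{v=1}=\bi\,\eta^3$). It is false for $(2,4,4)$: there the exponents take the hyperbolic form $mn$ and the summands carry weights linear in the lattice variables, e.g.\ $d_y = q_d^{-4}\sum_{m,n\ \mathrm{odd}} m\, q_d^{16mn}$, so the individual series are divisor sums, i.e.\ combinations of $E_2(q^{1/2})$, $E_2(q)$, $E_2(q^2)$---each of which is only \emph{quasi}modular, not modular. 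Modularity of the coefficients holds only because the specific combinations occurring in $d_y$ and $d_{yz}$ assemble into $C_2^2(q)$ and $1-A_2^2(q)$ via identities such as $A_2^2 = 2E_2(q^2)-E_2(q)$, and even then only after the additive constant $1/4$ and the prefactor $q^{-1/8}$ are absorbed by rescaling $(x,y,z)$. Your route produces no theta constant to match here, and your proposal has no mechanism for this $E_2$-cancellation step. The danger is not hypothetical: in the neighboring $(2,3,6)$ case the exactly analogous lattice sums yield Appell-function/mock-modular objects, and the paper is consequently \emph{unable} to prove modularity there---demonstrating that modularity of these polygon-counting series is a special arithmetic feature of the $(3,3,3)$ and $(2,4,4)$ cases that must be verified by explicit computation, rather than an automatic consequence of the lifting picture.
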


The proof is arithmetic in nature. We explicitly express the open Gromov-Witten potential in terms of the Dedekind $\eta$-function and Eisenstein series, and use known expressions for modular forms with respect to the groups $\Gamma = \Gamma(3)$ and $\Gamma(4)$.  We expect the same statement holds for the case $(a,b,c) = (2,3,6)$, see Section \ref{sec:236} for more details.

\begin{remark}
The theorem also holds for the elliptic orbifold $\bP^1_{2,2,2,2}$, namely the coefficients of the open Gromov-Witten potential of $\bP^1_{2,2,2,2}$ are modular forms for the modular group $\Gamma(2)$.  See Section \ref{sec:2222}.  In this case $W$ is defined on the resolved conifold $\CO_{\bP^1}(-1)\oplus\CO_{\bP^1}(-1)$ rather than $\C^3$, and its critical locus is the zero section $\bP^1 \subset \CO_{\bP^1}(-1)\oplus\CO_{\bP^1}(-1)$ rather than an isolated point.  Thus we separate this case from the above theorem.
\end{remark}

For an elliptic $\bP^1$ orbifold $\bP^1_{a,b,c}$, the mirror functor produces a particular matrix factorization $M$ of $W_q$, which is an odd endomorphism $\delta$ on $\largewedge^* \C^3$ satisfying $\delta^2 = W_q \cdot \mathrm{Id}$. This matrix factorization has the important property that it split generates the derived category of matrix factorizations, and it is mirror to the Seidel Lagrangian. Using similar arithmetic techniques, we can express $M$ in terms of modular forms.

\begin{thm}
Let $M$ be the matrix factorization of the open Gromov-Witten potential $W_q(x,y,z)$ which is mirror to the Seidel Lagrangian in $\bP^1_{a,b,c}$, where $(a,b,c) = (3,3,3)$ or $(2,4,4)$. The matrix entries of $M$ are polynomials in $x,y,z$ whose coefficients are modular forms of weight $k$ for the
modular group $\Gamma = \Gamma(3)$ or $\Gamma(4)$ respectively.
\end{thm}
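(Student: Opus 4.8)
The plan is to read off the entries of $\delta$ from the same Lagrangian Floer data that produces $W_q$, and then to recognize their $q$-coefficients as modular forms exactly as in the proof of Theorem~\ref{thm:main}. Recall from \cite{CHL} that $\delta$ is obtained by deforming the Floer differential of the Seidel Lagrangian by the bounding cochain $b = xX + yY + zZ$; concretely, its matrix entries are the structure constants $m_k(b,\dots,b)$, which are generating series counting holomorphic polygons bounded by the Seidel Lagrangian with prescribed corners. These are convergent $q$-series of exactly the same analytic type as the coefficients of $W_q$, so the first step is to write each of them explicitly as a combination of theta constants (equivalently of the Dedekind $\eta$-function and Eisenstein series), following verbatim the computation underlying Theorem~\ref{thm:main}.

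Second, I would invoke the identity $\delta^2 = W_q\cdot\mathrm{Id}$ as the organizing principle. Comparing the coefficient of each monomial in $x,y,z$ on both sides expresses products of pairs of entry-coefficients in terms of the coefficients of $W_q$, which are already known to be modular by Theorem~\ref{thm:main}. Combined with the explicit theta expressions, this pins down both the transformation law and the weight of each individual entry-coefficient for $\Gamma = \Gamma(3)$ or $\Gamma(4)$. One then matches these against the known, finite-dimensional spaces of modular forms for $\Gamma(3)$ and $\Gamma(4)$ to certify modularity.

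The main obstacle is the bookkeeping of weights and multiplier systems of the individual entries. Since $\delta$ is a square root of $W_q$ in the sense that $\delta^2 = W_q \cdot \mathrm{Id}$, its entries are naturally weight-$\tfrac{k}{2}$ objects and a priori only theta functions carrying a nontrivial multiplier system; the work is to show that after passing to the principal congruence subgroup $\Gamma(3)$ or $\Gamma(4)$ this multiplier trivializes to an honest character, so that each coefficient is a genuine modular form. A secondary bookkeeping point is to propagate the explicit change of coordinates in $(x,y,z)$ that rewrites $W_q$ in the normal form $W^{\textrm{mir}}$ through the matrix factorization, verifying that it preserves modularity of the entries.
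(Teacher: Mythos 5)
Your first step---extracting the entries $w_x,w_y,w_z$ of $\delta$ as explicit polygon-counting $q$-series and rewriting them in closed form via $\theta$-constants and $\eta$---is exactly the paper's route (the proofs of Theorems \ref{thm:MF333} and \ref{thm:MF244} are direct computations of this kind; in the $(2,4,4)$ case the entry coefficients turn out to be literally $d_y$ and $d_{yz}/2$ from Section \ref{sectionGWpotentials}, so nothing beyond the potential's case is needed there). The problem is your second step, which is where your sketch locates the actual certification of modularity, and it does not work. For the specific Clifford-type operator $\delta = (xX+yY+zZ)\wedge\cdot + w_x\iota_X + w_y\iota_Y + w_z\iota_Z$, the relation $\delta^2 = W\cdot\mathrm{Id}$ collapses to the single identity $x w_x + y w_y + z w_z = W$, which is \emph{linear} in the unknown entries (the square of the wedge part and the square of the contraction part each vanish identically). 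Comparing monomial coefficients therefore only constrains certain \emph{sums} of entry-coefficients to equal the coefficients of $W_q$, and modularity of a sum does not descend to its summands. In the $(3,3,3)$ case this is not a technicality: the $yz$-coefficient of $w_x$ is $\tfrac{1}{3}\psi(q)-\tfrac{2}{3}\eta(q)$, while the $xz$- and $xy$-coefficients of $w_y,w_z$ are each $\tfrac{1}{3}\psi(q)+\tfrac{1}{3}\eta(q)$; the $\eta(q)$ contributions cancel identically in $xw_x+yw_y+zw_z$, so the identity $\delta^2=W\cdot\mathrm{Id}$ carries no information about precisely the terms that distinguish the entries from the coefficients of $W$. (The paper records $xw_x+yw_y+zw_z=W$ only as an after-the-fact consistency remark, not as an ingredient of the proof, and no $y\leftrightarrow z$-type symmetry is available to split the sum in the $(3,3,3)$ case.) Your weight heuristic fails for the same reason: since the relation is linear rather than quadratic, the entries are not ``weight-$k/2$ square roots'' of $W$'s coefficients, and in fact the $(3,3,3)$ entry-coefficients are inhomogeneous, mixing the weight-$3/2$ forms $\eta(q^{1/3})^3,\eta(q^3)^3$ with the weight-$1/2$ form $\eta(q)$.

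What your sketch is missing is the one genuinely new computation beyond Theorem \ref{thm:main}: a resummation showing that the entries are \emph{not} ``of exactly the same analytic type'' as the coefficients of $W_q$. For example, the $yz$-coefficient of $w_x$ is $\sum_{k\in\Z}(-1)^{k+1}(2k+1)q_{\mathrm{d}}^{(6k+1)^2}$; writing $2k+1=\tfrac{1}{3}(6k+1)+\tfrac{2}{3}$ splits it into $\tfrac{1}{3}\psi(q_{\mathrm{d}})$ plus $\tfrac{2}{3}\sum_{k\in\Z}(-1)^{k+1}q_{\mathrm{d}}^{(6k+1)^2}$, and the second sum is identified with $-\eta(q_{\mathrm{d}}^{24})=-\eta(q)$ via Euler's pentagonal number theorem $\eta(q)=q^{1/24}\sum_{k\in\Z}(-1)^k q^{(3k^2-k)/2}$. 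With these closed forms in hand, modularity for $\Gamma(3)$ (resp.\ $\Gamma(4)$) follows from what was already proved for $\phi,\psi$ (resp.\ $d_y,d_{yz}$) together with modularity of $\eta$, and no appeal to $\delta^2=W\cdot\mathrm{Id}$ is needed. Your final bookkeeping point is also moot: the theorem concerns the matrix factorization of $W_q$ in the original coordinates, so no normal-form change of coordinates has to be propagated through $M$.
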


Why modularity is expected can be explained as follows.  The Seidel Lagrangian in the elliptic orbifold $\bP^{1}_{a,b,c}=E/\Z_r$, where $r=3,4,6$ for the $(a,b,c)=(3,3,3),(2,4,4),(2,3,6)$ case respectively, can be lifted to $r$ copies of Lagrangians in the elliptic curve $E$. Thus the moduli space around the large volume limit under consideration on the symplectic side is the moduli space of K\"ahler structure of $E$ \emph{together with a particular choice of $r$ Lagrangians}. The mirror is the family of elliptic curves decorated with structures of $r$-torsion points, whose moduli space turns out to be the modular curve $\Gamma\backslash \mathcal{H}^{*}$. Mirror symmetry asserts that the A-side moduli is globally isomorphic to the B-side moduli.  Thus $\Gamma\backslash \mathcal{H}^{*}$ should also be the global K\"ahler moduli. Our results confirm that the open Gromov-Witten potential, which is originally just defined around the large volume limit, naturally extend to this global K\"ahler moduli space.

\begin{remark}
Modularity of closed Gromov-Witten potentials for elliptic curves and elliptic orbifolds is derived in a series of works including \cite{Dijkgraaf:1995, Kaneko:1995, Eskin:2001, Okunkov:2002, Milanov:2011, Satake:2011,  Li:2011mi, Shen:2014}. 
For discussions on modularity of some higher dimensional Calabi-Yau varieties, interested readers are referred to \cite{Bershadsky:1993ta, Bershadsky:1993cx, Antoniadis:1995zn, Kachru:1995wm, Marino:1998pg, Klemm:2004km, Klemm:2005pd, Aganagic:2006wq, Alim:2012ss, Klemm:2012sx, Alim:2013eja, Pandharipande:2014} and references therein for details.
\end{remark}

\subsection*{Structure of the paper}
In Section \ref{sectionmodularforms}, we review some basic materials on modular forms and elliptic curve families defined over some modular curves. In Section \ref{sectionGWpotentials}, we recall the construction of the Seidel Lagrangian and 
prove the modularity for the potentials $W$. In Section \ref{sectionmatrixfactorizations}, we prove the modularity for the matrix factorizations $M$.
We discuss why modularity is expected from the perspective of mirror symmetry and give one further example in Section \ref{sectionexplanation}.

\subsection*{Acknowledgment}
We are grateful to Shing-Tung Yau for constant support and encouragement.
We thank Kathrin Bringmann and Larry Rolen for email correspondences and helpful discussions on mock modular forms.  
The second author thanks Murad Alim, Emanuel Scheidegger, and Shing-Tung Yau for fruitful collaborations on related projects.
He also thanks Teng Fei, Todor Milanov, Yongbin Ruan, Yefeng Shen for very useful discussions on various aspects of elliptic orbifolds and modular forms.
We thank the referees for helpful comments and pointing out the importance of the study of the elliptic orbifold $\bP^1_{2,2,2,2}$\,.

Part of the work is done while the second author was a graduate student at the mathematics department at Harvard.  We would like to thank the department for providing an excellent research atmosphere. J. Z. is supported by the Perimeter Institute for Theoretical Physics. Research at Perimeter Institute is  supported  by the Government  of Canada  through  Industry Canada and  by the Province of Ontario through the Ministry of Economic Development and Innovation.


\section{Preliminaries on modular forms} \label{sectionmodularforms}

In this section we give a quick review on some background material about modular forms and modular curves. They are essential to our study because global K\"ahler moduli space of elliptic orbifolds will be identified as modular curves by using mirror symmetry. The open Gromov-Witten potentials and matrix factorizations will be written in terms of modular forms, which are global sections of the corresponding line bundles over modular curves.
The material presented here is largely taken from a joint work \cite{Alim:2013eja} of the second author.

Throughout this paper, we fix $q=\exp 2\pi i\tau$, with $\tau$ is the coordinate on the upper-half plane $\mathcal{H}$. The quantity $-2\pi i
\tau$ can be regarded as parametrizing the (complexified) symplectic
area of an elliptic orbifold (and so $q$ defines a local coordinate near the large volume limit $q=0$ on
the complexified K\"ahler moduli space of the elliptic orbifold).

\subsection{Modular groups and modular forms}

The generators and relations for the group
$\mathrm{SL}(2,\mathbb{Z})$ are given by the following:
\begin{equation}
  \label{Sl2Z}
T =
  \begin{pmatrix}
    1 & 1\\ 0 & 1
  \end{pmatrix}\,,\quad
  S=
  \begin{pmatrix}
    0& -1 \\ 1 & 0\\
  \end{pmatrix}\,,\quad S^{2}=-I\,,\quad (ST)^{3}=-I\,.
\end{equation}
We will consider in this paper the following congruence subgroups called Hecke subgroups of 
$\Gamma(1)=\mathrm{PSL}(2,\mathbb{Z})=\mathrm{SL}(2,\mathbb{Z})/\{\pm I\}$
\begin{equation}
\Gamma_{0}(N)=\left\{ \left.
\begin{pmatrix}
a & b  \\
c & d
\end{pmatrix}\in \Gamma(1)
\right\vert\, c\equiv 0\,~ \textrm{mod} \,~ N\right\}< \Gamma(1)\,.
\end{equation}
Some other groups that we are interested in are the principal
congruence subgroups
\begin{equation}
\Gamma(N)=\left\{ \left.
\begin{pmatrix}
a & b  \\
c & d
\end{pmatrix}\in \Gamma(1)
\right\vert\,
\begin{pmatrix}
a & b  \\
c & d
\end{pmatrix}
\equiv
\begin{pmatrix}
1 & 0 \\
0 & 1
\end{pmatrix}
\,~ \textrm{mod} \,~ N\right\}< \Gamma(1)\,.
\end{equation}
One has $\Gamma(N) <  \Gamma_0(N)<\Gamma(1)=\mathrm{PSL}(2,\mathbb{Z})$.

A modular form of weight $k$ for the congruence subgroup $\Gamma$ of $\mathrm{PSL}(2,\mathbb{Z})$ is a
function $f : \mathcal{H}\rightarrow \mathbb{C}$
satisfying the following conditions:
\begin{itemize}
\item $\,f(\gamma \tau)=j_{\gamma}(\tau)^{k}f(\tau), \quad \forall \gamma\in \Gamma\,$, where $j$ is called the $j$-automorphy factor and is defined by
$$j: \Gamma \times \mathcal{H}\rightarrow
\mathbb{C},\quad \left(\gamma=\left(
\begin{array}{cc}
a & b  \\
c & d
\end{array}
\right),\tau\right)\mapsto j_{\gamma}(\tau):=(c\tau+d)\,.$$
\item $\,f$ is holomorphic on $\mathcal{H}$.
\item $\,f$ is ``holomorphic at the cusps" in the sense that the function
\begin{equation}\label{slash}
 \tau\mapsto j_{\gamma}(\tau)^{-k}
f(\gamma\tau)
\end{equation}
is holomorphic at $\tau=i\infty$ for any $\gamma\in \Gamma(1)$.
\end{itemize}
The second and third conditions in the above can be equivalently
described as saying that $f$ is holomorphic on the modular curve
$X_{\Gamma}=\Gamma\backslash \mathcal{H}^{*}$, where
$\mathcal{H}^{*}=\mathcal{H}\cup \mathbb{P}^{1}(\mathbb{Q})$, i.e.,
$\mathcal{H}\cup \mathbb{Q}\cup \{i\infty\}$. The first condition means that $f$ can be formulated as a holomorphic section of a line bundle over $X_{\Gamma}$.

We will need to be able to take roots of modular forms. For this purpose we introduce the notion of multiplier system. A multiplier system of weight $k$ for $\Gamma$
is a function $v:\Gamma\rightarrow \mathbb{C}$ such that
$|v(\gamma)|=1$ and $v(\gamma_{1}\gamma_{2})=w(\gamma_{1},\gamma_{2})v(\gamma_{1})v(\gamma_{2})$ for some $w(\gamma_{1},\gamma_{2})$.
We then define modular forms of weight $k$ with the multiplier system $v$ by replacing the $j$-automorphy factor in
\eqref{slash} by the new automorphy factor $v(\gamma)j_{\gamma}(\tau)$,
see for example
\cite{Rankin:1977ab} for details. 
The simplest case is when $v$ depends only on the entry $d$ of $\gamma$.
In the following we will be mostly dealing with the case where $v$
is given by a Dirichlet character $\chi$.
The space of modular forms with the
multiplier system $\chi$ for $\Gamma$ forms a graded differential
ring and is denoted by $M_{*}(\Gamma, \chi)$. Similarly we have the ring of
even weight modular forms  denoted by $M_{\mathrm{even}}(\Gamma,\chi)$. When $\chi$ is trivial,
we shall often omit it and simply write $M_{*}(\Gamma)$.

\begin{example}
Taking the group $\Gamma$ to be the full modular group
$\Gamma(1)=\mathrm{PSL}(2,\mathbb{Z})$. Then
$M_{*}(\Gamma(1))=\mathbb{C}[E_{4},E_{6}]$, where $E_{4},E_{6}$ are
the familiar Eisenstein series defined by
\begin{align*}
E_{4}(\tau)&=1+240\sum_{d=1}^{\infty}\sigma_{3}(d)q^{d},\quad q=e^{2
\pi
i\tau},\quad \sigma_{3}(d)=\sum_{k:\,k|d}k^{3}\,,\\
E_{6}(\tau)&=1-504\sum_{d=1}^{\infty}\sigma_{3}(d)q^{d},\quad q=e^{2
\pi i\tau},\quad \sigma_{5}(d)=\sum_{k:\,k|d}k^{5}\,.
\end{align*}
The Eisenstein series
$E_{2}(\tau)=1-24\sum_{d=1}^{\infty}\sigma_{1}(d)q^{d}$ is not a
modular form, but a so-called quasi-modular form \cite{Kaneko:1995}
for $\Gamma(1)$, since it transforms according to
\begin{equation*}
E_{2}({a\tau+b\over c\tau+d})=(c\tau+d)^{2}E_{2}(\tau)+{12\over 2\pi
i}c(c\tau+d),\quad \forall ~\tau\in \mathcal{H},\quad \forall
\begin{pmatrix}
a & b  \\
c & d
\end{pmatrix} \in \Gamma(1)\,.
\end{equation*}
\end{example}

\subsection{Ring of modular forms}\label{sectionringofmodularforms}

Now we consider modular forms (with possibly non-trivial multiplier
systems) for the Hecke subgroups $\Gamma_{0}(N)$ with $N=2,3,4$ and
the subgroup $\Gamma_{0}(1^{*})$ which is the unique index two
normal subgroup of $\Gamma(1)=\mathrm{PSL}(2,\mathbb{Z})$. 
All of them are of
genus zero in the sense that the corresponding modular curves\footnote{The $N=1^{*}$ case is anomalous, more details are given in Section \ref{sectiongeometricmoduli}. For further discussion, see \cite{Maier:2009}.} $X_{0}(N):=\Gamma_{0}(N)\backslash \mathcal{H}^{*}$ are genus zero
Riemann surfaces. Each of the corresponding modular curves
$X_{\Gamma}$ has three singular points: two (equivalence classes) of
cusps\footnote{Here we use the notation $[\tau]$ to denote the $\Gamma$-equivalence class of $\tau\in \mathcal{H}^{*}$.} $[i\infty],[0]$, and the third one is a cusp or an
elliptic point, depending on the modular group. It is a quadratic
elliptic point $[\tau]=[i]$ for $N=2$, cubic elliptic point $[\tau]=[\exp
2\pi i/3]$ for $N=3$ and $N=1^{*}$, and a cusp $[\tau]=[1/2]$ for $N=4$.
For a review of these facts, see for instance \cite{Rankin:1977ab}.

We can choose a particular Hauptmodul (i.e., a generator for the
rational function field of the genus zero modular curve)
$\alpha(\tau)$ for the corresponding modular group such that the two
cusps are given by $\alpha=0,1$ respectively, and the third one is
$\alpha= \infty$. It is given by $\alpha(\tau)=C^{r}(\tau)/
A^{r}(\tau)$, where $r=6,4,3,2$ for the cases $N=1^{*},2,3,4$
respectively.  The functions\footnote{Throughout this work, when we take factional powers of modular forms and modular functions, we always take the principal branch of the logarithm.} $A(\tau),C(\tau)=\alpha(\tau)^{1\over
r}A(\tau), B(\tau)=(1-\alpha(\tau))^{1\over r}A(\tau)$ are given in
Table \ref{tableetaexpansions} below.
\begin{table}[h]
  \centering
  \caption[$\eta$-expansions of $A,B,C$ for $\Gamma_{0}(N), N=1^{*},2,3,4$]{$\eta$-expansions of $A,B,C$ for $\Gamma_{0}(N), N=1^{*},2,3,4$}
  \label{tableetaexpansions}
  \renewcommand{\arraystretch}{1.2} 
 \begin{tabular}{c|ccc}
$N$&$A$&$B$&$C$\\
\hline
$1^{*}$&$E_{4}(\tau)^{1\over 4}$&$({E_{4}(\tau)^{3\over 2}+E_{6}(\tau)\over 2})^{1\over 6}$&$({E_{4}(\tau)^{3\over 2}-E_{6}(\tau)\over 2})^{1\over 6}$\\
$2$&${(2^{6}\eta(2\tau)^{24}+\eta(\tau)^{24} )^{1\over 4} \over \eta(\tau)^2\eta(2\tau)^2}$&${\eta(\tau)^{4}\over \eta(2\tau)^{2}}$&$2^{3 \over 2}{\eta(2\tau)^4\over \eta(\tau)^2}$ \\
$3$&${(3^{3}\eta(3\tau)^{12}+\eta(\tau)^{12} )^{1\over 3} \over \eta(\tau)\eta(3\tau)}$&${\eta(\tau)^{3}\over \eta(3\tau)}$&$3{\eta(3\tau)^3\over \eta(\tau)}$ \\
$4$&${(2^{4}\eta(4\tau)^{8}+\eta(\tau)^{8} )^{1\over 2} \over
\eta(2\tau)^2}=
{\eta(2\tau)^{10}\over\eta(\tau)^{4}\eta(4\tau)^{4}}$&${\eta(\tau)^{4}\over
\eta(2\tau)^2}$&$2^2{\eta(4\tau)^4\over \eta(2\tau)^2}$
\end{tabular}
\end{table}
See \cite{Borwein:1991,
Berndt:1995} and also \cite{Maier:2009, Maier:2011} for a review on the modular forms $A,B,C$.
Throughout this paper we shall write $A_N, B_N, C_N$ for the $\Gamma = \Gamma_0(N)$
case for these quantities when potential confusion might arise. 

The explicit expressions for these quantities in terms of $\theta$-functions and $q$-series can be found in a lot of literature.
By using the $\theta$-expansions therein for these generators, one can easily see that
\begin{equation}
A_{2}^{2}=A_{4}^{2}+C_{4}^{2}\,,\quad
C_{2}^{2}=2A_{4}C_{4}\,.
\end{equation} 
The
following results are classical:
\begin{align*}
M_{\mathrm{even}}(\Gamma_{0}(2))&=\mathbb{C}[A_{2}^{2},B_{2}^{4}]\,,\\
M_{*}(\Gamma_{0}(3),\chi_{-3})&=\mathbb{C}[A_{3},B_{3}^{3}]\,,\\
M_{*}(\Gamma_{0}(4),\chi_{-4})&=\mathbb{C}[A_{4},B_{4}^{2}]\,.
\end{align*}
Here $\chi_{-3}(d)=\left(\frac{-3}{d}\right)$ is the Legendre symbol
and it gives the non-trivial Dirichlet character for the modular
forms. Similarly, $\chi_{-4}(d)=\left(\frac{-4}{d}\right)$. From
these we can derive the following results:
\begin{align}
M_{*}(\Gamma(3))&=\mathbb{C}[A_{3},C_{3}]\,,\\
M_{*}(\Gamma(4))&=\mathbb{C}[A_{4},C_{4},C_{2}]/\langle C_{2}^{2}-2A_{4}C_{4} \rangle\,.
\end{align}
For the modular group $\Gamma(2)$, the ring of modular forms is isomorphic to that for $\Gamma_{0}(4)$ by using the $2$-isogeny which gives an isomorphism between the modular groups.
See for instance \cite{Bannai:2001, Sebbar:2002, Maier:2011} and references therein for
details of all these results.

\subsection{Geometric moduli in terms of modular forms}\label{sectiongeometricmoduli}

In this section, we shall discuss some basic facts about the geometry
and arithmetic of the elliptic curve families of $E_{n}$ type\footnote{The names come from the fact that the total spaces of the elliptic curve families correspond to the $E_{n}$ del Pezzo surfaces, see for instance \cite{Klemm:2012sx} for further explanation.}, $n=5,6,7,8$. 
They are
closely related to\footnote{In fact, for the $n=6,7,8$ cases these are, up to reparametrization, the simple elliptic singularities \cite{Saito:1974} $E_{6}^{(1,1)},E_{7}^{(1,1)}, E_{8}^{(1,1)}$ and are mirror to the elliptic orbifolds, see \cite{Milanov:2011}.} the elliptic orbifolds which are the main focus
of this work, as we shall see in the sequel.

The equations for the elliptic curve families are given by
\begin{eqnarray}
n=5&:&  \mathbb{P}^{3}[1,1,1,1][2,2]:x_{1}^{2}+x_{3}^{2}-z^{-{1\over 2r}}x_{2}x_{4}=0\nonumber\,,\\
&&\hspace{30mm} x_{2}^{2}+x_{4}^{2}-z^{-{1\over 2r}}x_{1}x_{3}=0\nonumber\,,\\
n=6&:&  \mathbb{P}^{2}[1,1,1][3]: x_{1}^{3}+x_{2}^{3}+x_{3}^{3}-z^{-{1\over r}}x_{1}x_{2}x_{3}=0\nonumber\,,\\
n=7&:&  \mathbb{P}^{2}[1,1,2][4]: x_{1}^{4}+x_{2}^{4}+x_{3}^{2}-z^{-{1\over r}}x_{1}x_{2}x_{3}=0\nonumber\,,\\
n=8&:&  \mathbb{P}^{2}[1,2,3][6]:
x_{1}^{6}+x_{2}^{3}+x_{3}^{2}-z^{-{1\over r}}x_{1}x_{2}x_{3}=0\,,
\end{eqnarray}
where the numbers $r$ are given by $2,3,4,6$ for $n=5,6,7,8$,
respectively.\\

The $j$-invariants for these elliptic curve families are summarized
here, see \cite{Lian:1994zv,Lerche:1996ni, Klemm:1996,
Chiang:1999tz} for more details.  
\begin{equation}
E_{5}: \, \left\{
\begin{array}{c l}
x_{1}^{2}+x_{3}^{2}-z^{-{1\over 4}} x_{2}x_{4}&=0\\
x_{2}^{2}+x_{4}^{2}-z^{-{1\over 4}} x_{1}x_{3}&=0
\end{array}\right.
\quad j(z)={(1+224z+256z^{2})^{3}\over z(1-16z)^{4}}\,.
\end{equation}

The base of this family of elliptic curves is the modular curve
$X_{0}(4)$. It has three singular points: two cusp classes
$[i\infty],[0]$ corresponding to $z=0,1/16$ respectively; and the
cusp class $[1/2]$ corresponding to $z=\infty$.
\begin{equation}
E_{6}: \,x_{1}^{3}+x_{2}^{3}+x_{3}^{3}-z^{-{1\over 3}}
x_{1}x_{2}x_{3}=0 \,,\quad j(z)={(1+216z)^{3}\over
z(1-27z)^{3}}\,.
\end{equation}

The base of this family of elliptic curves is the modular curve
$X_{0}(3)$. It has three singular points: two cusp classes
$[i\infty],[0]$ corresponding to $z=0,1/27$ respectively; and the
cubic elliptic point $[ST^{-1}(\rho)]$ corresponding to $z=\infty$,
where $\rho=\exp (2\pi i/3)$.

\begin{equation}
E_{7}:\,x_{1}^{4}+x_{2}^{4}+x_{3}^{2}-z^{-{1\over 4}}
x_{1}x_{2}x_{3}=0 \,,\quad j(z)={(1+192z)^{3}\over
z(1-64z)^{2}}\,.
\end{equation}

The base of this family of elliptic curves is the modular curve
$X_{0}(2)$. It has three singular points: two cusp classes
$[i\infty],[0]$ corresponding to $z=0,1/64$ respectively; and the
quadratic elliptic point $[(1+i)/2]=[ST^{-1}(i)]$ corresponding to
$z=\infty$.

\begin{equation}
E_{8}:\,x_{1}^{6}+x_{2}^{3}+x_{3}^{2}-z^{-{1\over 6}}
x_{1}x_{2}x_{3}=0\,,\quad j(z)={1\over z(1-432z)}\,.
\end{equation}

The base of this family of elliptic curves is the curve
$X_{0}(1^{*})=\Gamma_{0}(1^{*})\backslash \mathcal{H}^{*}$, where
$\Gamma_{0}(1^{*})$ is the unique index $2$ normal subgroup of
$\Gamma(1)=\mathrm{PSL}(2,\mathbb{Z})$. It has three singular points: two
cusp classes $[i\infty],[0]$ corresponding to $z=0,1/432$
respectively; and the cubic elliptic point $[\rho]$ corresponding to
$z=\infty$.\\

The Hauptmodul for the corresponding modular group given in the
previous section is related to the parameter $z$ by
$\alpha=\kappa_{N} z$, where $\kappa_{N}$ is given $432,64,27,16$
for $n=8,7,6,5$ (i.e., $N=1^{*},2,3,4$), respectively.
For reference, we now summarize the related quantities in Table \ref{tablearithmeticnumbers} below.
\begin{table}[h]
  \centering
  \caption[Arithmetic numbers]{Arithmetic numbers}
  \label{tablearithmeticnumbers}
  \renewcommand{\arraystretch}{1.2} 
 \begin{tabular}{c|cccc}
$n$&5&6&7&8\\
\hline
$N$&4&3&2&$1^{*}$\\
$r$&2&3&4&6\\
$\kappa_{N}$&16&27&64&432\\
\end{tabular}
\end{table}
Here the number $r$ is given by $r=12/\nu$, with $\nu$ being the index of the subgroup in the full modular group $\mathrm{PSL}(2,\mathbb{Z})$.

\begin{remark}
The Picard-Fuchs operators of the above elliptic curves of $E_{n}$
type have the form
\begin{equation}\label{PFforonefold}
\mathcal{L}_{\mathrm{Picard-Fuchs}}=\theta^{2}-\alpha
(\theta+{1\over r})(\theta+1-{1\over r}),\quad
\theta=\alpha{\partial\over \partial \alpha}\,.
\end{equation}
Denote $A(\alpha)=\,_{2}F_{1}({1\over r},1-{1\over r},1;\alpha)$ to be
the regular period at $\alpha=0$ of the elliptic curve family. Then
the modular form $A(\tau)$ given in the previous section is actually
given by $A(\alpha(\tau))$. One also has $\tau(\alpha)={i\over
\sqrt{N}}A(1-\alpha)/A(\alpha)$. Therefore, the triple $A(\tau),
B(\tau),C(\tau)$ introduced earlier can be reconstructed from the
periods, see \cite{Borwein:1991, Berndt:1995, Maier:2009}. This fact was used in
\cite{Alim:2013eja, Zhou:2014thesis} in studying modularity in Gromov-Witten theory and mirror symmetry for some
non-compact Calabi-Yau threefolds.
\end{remark}

In Section \ref{sectionGWpotentials} and Section \ref{sectionmatrixfactorizations}
below, we will be mainly working with the A-model of the elliptic orbifolds, that is, studying the dependence of the generating functions of genus zero open Gromov-Witten invariants on the complexified K\"ahler structure.
In Section \ref{sectionexplanation}, we will comment on how mirror symmetry maps the symplectic geometry data of elliptic orbifolds to the complex geometry data of the elliptic curve families described in this section. This would then give an explanation of why modularity is expected.


\section{Open Gromov-Witten potentials of elliptic orbifolds}\label{sectionGWpotentials}

In this section, we study modularity of open Gromov-Witten potentials of elliptic orbifolds.  First let us have a quick glance on the construction of open Gromov-Witten potentials in \cite{CHL,CHKL} using immersed Lagrangian Floer theory.

Given a K\"ahler orbifold $X$, we fix a Lagrangian immersion $\BL$, which is assumed to be oriented and (relatively) spin, and not passing through the orbifold points of $X$.  Moreover we assumed that it has transverse self-intersections for simplicity.  Let $\iota: \tilde{\BL} \to X$ denote the normalization of $\BL$.  We assume that $\tilde{\BL}$ is connected.

We use the deformations and obstructions of $\BL$ to construct a Landau-Ginzburg model $(V,W)$.  It is called to be the generalized SYZ construction: it uses deformations of an immersed Lagrangian to construct the mirror, while SYZ uses a Lagrangian torus fibration for the same purpose.
The detailed deformation theory for Lagrangian immersion, which is captured by an $A_\infty$ algebra $\left(H,\{m_k\}_{k=0}^\infty\right)$, was developed in \cite{AJ}.  Here we only sketch the needed ingredients.

Each transverse self-intersection point $a$ corresponds to two immersed generators $X^0_a,X^1_a$ of the Floer complex of $\BL$.  Intuitively they represent the two ways of smoothings of the self-intersection point.  For a formal deformation 
\begin{equation}
b = \sum_a (x^0_a X^0_a + x^1_a X^1_a) \in H = \bigoplus_a \Span_\C \{X^0_a,X^1_a\}\,,
\end{equation}
where the sum is over all self-intersection points $a$,
we have the deformed $m_0$-term
\begin{equation}
m_0^b = \sum_{k=0}^\infty m_k(b,\ldots,b) = \sum_{k=0}^\infty \sum_{\substack{
(a_1,\ldots,a_k)\\(s_1,\ldots,s_k)}}  m_k(X^{s_1}_{a_1}, \ldots, X^{s_k}_{a_k}) x^{s_1}_{a_1} \ldots x^{s_k}_{a_k} \,,
\end{equation}
which is a singular chain in the fiber product $\tilde{\BL} \times_\iota \tilde{\BL}$.  Roughly speaking it is a sum of the boundary evaluation images of holomorphic polygons bounded by $\BL$ (weighted by their symplectic areas) with corners at the immersed generators.

Then we choose a subspace $V$ of $H$ whose elements $b \in V$ have odd degrees and satisfy the so-called weak Maurer-Cartan equation \cite{FOOO}
\begin{equation}
 m_0^b = W(b) \one_{\tilde{\mathcal{L}}}\,, 
\end{equation}
where $\one_{\tilde{\mathcal{L}}}$ denotes the fundamental class of $\tilde{\mathcal{L}}$.  Such deformations $b$ are called to be weakly unobstructed.  This defines a function $W$ on $V$, and we call it to be the open Gromov-Witten potential of $\mathcal{L}$ because coefficients of $W$ are obtained by counting pseudoholomorphic polygons bounded by the immersed Lagrangian $\mathcal{L}$.

To construct the open Gromov-Witten potential (or so-called Landau-Ginzburg mirror) of elliptic $\bP^1$ orbifolds, we take $\mathcal{L}$ to be the Lagrangian immersion constructed by Seidel \cite{Seidel:g=2}. It has three self-intersection points as depicted in Figure \ref{Seidel_Lag}. We take the formal deformations $b = xX + yY + zZ$, where $X,Y,Z$ are immersed generators of odd degrees as shown in the figure.  By \cite[Lemma 7.5]{CHL}, these deformations are weakly unobstructed.  Thus we obtain an open Gromov-Witten potential $W(x,y,z)$.

\begin{figure}[htp]
\begin{center}
\includegraphics[scale=0.35]{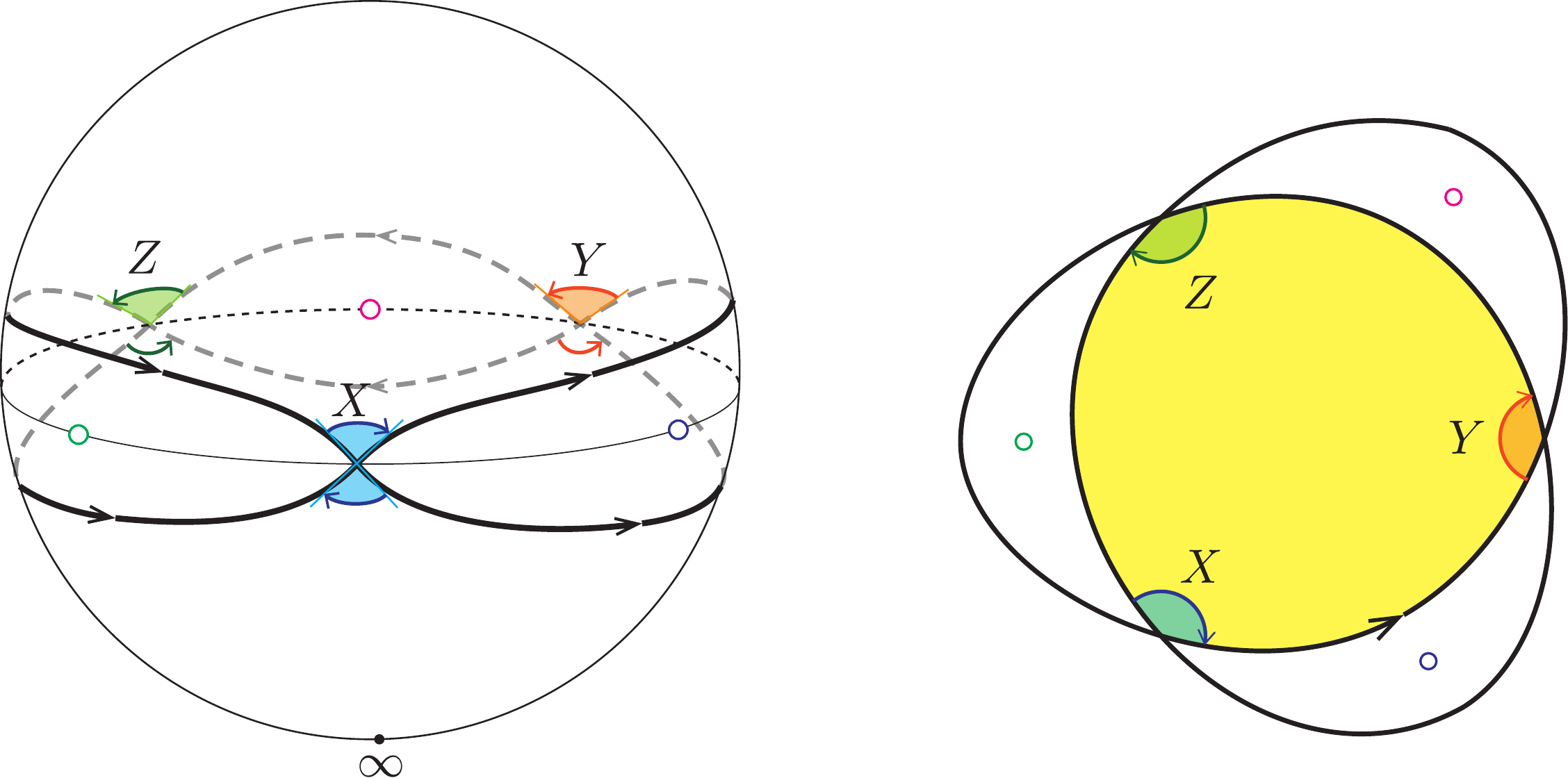}
\caption{The Seidel Lagrangian.  The two pictures above show the same Lagrangian immersion from different viewpoints.  The three dots on the equator represent orbifold points.  The shaded triangle on the right contributes to the term $-q_d xyz$ of the open Gromov-Witten potential, where $q_d = \exp (-A)$, $A$ is the symplectic area of the shaded triangle.}\label{Seidel_Lag}
\end{center}
\end{figure}

Note that the potential $W(x,y,z)$ depends on the K\"ahler parameter of the elliptic $\bP^1$ orbifold, which parametrizes the sizes of the holomorphic polygons.  Thus $W$ can be identified as a map from the K\"ahler moduli of the $\bP^1$ orbifold to the complex moduli of holomorphic functions.  We call this to be the generalized SYZ map because it arises from the generalized SYZ construction described above.

The explicit expression of $W$ and the generalized SYZ map were computed in \cite[Section 6.1]{CHL} for the elliptic orbifold $\bP^1_{3,3,3}$ and in \cite[Section 9 and 10]{CHKL} for the elliptic orbifolds $\bP^1_{2,4,4}$ and $\bP^1_{2,3,6}$. In the rest of this section we shall study modularity of the coefficients of the open Gromov-Witten potential.

\begin{remark}
There is another elliptic orbifold curve which is not listed above, namely $\bP^{1}_{2,2,2,2}$ which is the $\Z_2$-quotient of some elliptic curve.  A similar construction scheme for its open Gromov-Witten potential can be carried out, which involves more than one Lagrangian immersions.  The details about the construction of the open Gromov-Witten potential and the mirror functor will be given in a forthcoming work \cite{CHLnc}. In this paper, we will state the result of the open Gromov-Witten potential and discuss its modularity.
\end{remark}

\subsection{$(3,3,3)$ case}

\begin{thm}\cite{CHKL} \label{thm:W333}
The open Gromov-Witten potential for $\bP^1_{3,3,3}$ is
\begin{equation}
W = \phi(q_{\mathrm{d}}) (x^3 + y^3 + z^3) - \psi(q_{\mathrm{d}}) xyz \,,
\end{equation}
where
\begin{equation} 
\phi(q_{\mathrm{d}}) = \sum_{k=0}^{\infty} (-1)^{3k+1} (2k+1) q_{\mathrm{d}}^{3(12 k^2 + 12 k + 3)}\,,
\end{equation}
and
\begin{equation} 
\psi(q_{\mathrm{d}}) = -q_{\mathrm{d}} + \sum_{k=1}^{\infty} \left( (-1)^{3k+1} (6k+1) q_{\mathrm{d}}^{(6k+1)^2} + (-1)^{3k}(6k-1)q_{\mathrm{d}}^{(6k-1)^2} \right)\,.
\end{equation}
Here $q_{\mathrm{d}} = \exp(-\mathrm{area}(\Delta))$, with $\Delta$ the minimal triangle bounded by the Seidel Lagrangian.
\end{thm}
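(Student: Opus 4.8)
The plan is to read off $W$ directly from its definition. By the weak Maurer--Cartan equation $m_0^b = W(b)\,\one_{\tilde{\BL}}$ with $b = xX+yY+zZ$, the coefficient of a monomial $x^iy^jz^k$ in $W$ is the signed, area-weighted count of rigid holomorphic polygons bounded by the Seidel Lagrangian $\BL$ whose corners spell, in cyclic order, a word in $X,Y,Z$ with multiplicities $(i,j,k)$ and whose output is the fundamental class $\one_{\tilde{\BL}}$. Thus the whole problem reduces to enumerating such polygons and, for each, computing its area (which yields the power of $q_{\mathrm{d}}$) and its sign. By \cite[Lemma 7.5]{CHL} the deformation $b$ is weakly unobstructed, so this count is well defined.

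First I would pin down the shape of $W$ using grading and symmetry. Since $\bP^1_{3,3,3}$ satisfies $\tfrac13+\tfrac13+\tfrac13=1$ it behaves like a Calabi--Yau orbifold, and a Maslov-index / expected-dimension count as in \cite{CHL} shows that a rigid polygon whose inputs are the degree-one generators $X,Y,Z$ and whose output is the degree-zero unit must have exactly three corners. Hence every contributing polygon is a triangle and $W$ is homogeneous of degree three. Moreover $\BL$ is equivariant both for the $S_3$ permuting the three order-three orbifold points (hence cyclically permuting $X,Y,Z$) and for the scaling $(x,y,z)\mapsto(\zeta x,\zeta y,\zeta z)$ with $\zeta=e^{2\pi i/3}$ coming from the orbifold structure. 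Invariance of $W$ under these actions leaves only the monomials $x^3,y^3,z^3$ and $xyz$ and forces the first three to share a common coefficient, giving $W=\phi(q_{\mathrm{d}})(x^3+y^3+z^3)-\psi(q_{\mathrm{d}})\,xyz$ for some power series $\phi,\psi$ in $q_{\mathrm{d}}$.

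Next I would compute $\phi$ and $\psi$ by lifting to the universal cover, equivalently to the elliptic curve $E$ with $\bP^1_{3,3,3}=E/\Z_3$, where $\BL$ lifts to a union of straight arcs and holomorphic triangles become Euclidean triangles with vertices at lifts of the self-intersection points. Every contributing triangle is then a homothetic copy of the minimal triangle $\Delta$ of some integer size $n$, with area $n^2\,\mathrm{area}(\Delta)$ and hence weight $q_{\mathrm{d}}^{\,n^2}$. The corner labels are governed by $n \bmod 3$: when $3\nmid n$ the three corners are of distinct types $X,Y,Z$ and the triangle contributes to $xyz$, while when $3\mid n$ the corners collapse to a single type and the triangle contributes to one of $x^3,y^3,z^3$. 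A lattice-point count then supplies the multiplicities: only odd sizes carry rigid triangles, the number of size-$n$ triangles is $n$, and for $3\mid n$ these split evenly among $x^3,y^3,z^3$. This reproduces the exponents $(6k\pm1)^2$ with coefficients $6k\pm1$ in $\psi$ and the exponents $3(12k^2+12k+3)=\bigl(3(2k+1)\bigr)^2$ with coefficients $2k+1$ in $\phi$.

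The step I expect to be the main obstacle is fixing the signs, namely the factors $(-1)^{3k+1}$ and $(-1)^{3k}$; the areas and multiplicities follow from elementary Euclidean and lattice geometry once the families are correctly identified, but each rigid triangle contributes $\pm q_{\mathrm{d}}^{\,n^2}$ with a sign determined by the chosen orientation and relative spin structure on $\BL$ together with the corner branch-switching data. I would fix the spin structure as in \cite{CHL}, compute by hand the sign of the minimal triangle (the $-q_{\mathrm{d}}\,xyz$ term), and then show that incrementing the size by the smallest admissible step flips the recorded sign according to the stated pattern, by tracking the extra boundary self-intersections and marked-point passages acquired in the enlargement. Verifying that these signs assemble coherently into exactly $\phi$ and $\psi$, rather than merely up to an overall ambiguity, is the delicate part of the argument.
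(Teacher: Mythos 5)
First, a point of comparison: the paper you are reading does not prove this theorem at all --- it is quoted from \cite{CHKL} (the computation itself is in \cite[Section 6.1]{CHL} and \cite{CHKL}), and the paper's own contribution begins afterwards with the modularity of $\phi$ and $\psi$. So your proposal can only be measured against the cited counting argument, and your outline does mirror its skeleton: the weak Maurer--Cartan equation reduces $W$ to a signed, area-weighted polygon count; the grading coming from $\tfrac13+\tfrac13+\tfrac13=1$ forces degree three; the symmetry of the Seidel Lagrangian reduces the answer to the monomials $x^3+y^3+z^3$ (common coefficient) and $xyz$; and lifting to the elliptic curve makes every contributing polygon a homothet of $\Delta$ of integer size $n$ with weight $q_{\mathrm{d}}^{n^2}$. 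Up to this point the proposal is sound.

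There are, however, two genuine gaps. First, the enumerative core --- only odd $n$ occur, there are exactly $n$ triangles of size $n$, and for $3\mid n$ they split evenly among $x^3,y^3,z^3$ --- is asserted as ``elementary lattice geometry'' but never derived, and these claims \emph{are} precisely the content of the coefficients $(6k\pm1)$ and $(2k+1)$ in the statement; assuming them is close to circular. What actually kills even sizes and produces the mod-$6$ trichotomy is the corner condition: at each self-intersection only one of the two immersed generators is an input, and checking which homothets have all three corners of input type is the real work, absent from your sketch. Second, your sign plan, which you rightly identify as the crux, would fail as stated. Since $(-1)^{3k+1}=(-1)^{k+1}$ and $(-1)^{3k}=(-1)^{k}$, the signs in the stated series are \emph{constant on triples} of consecutive odd sizes: $n=1,3,5$ all carry $-$, $n=7,9,11$ all carry $+$, $n=13,15,17$ all carry $-$, i.e.\ the sign is $(-1)^{\lceil n/6\rceil}$. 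Hence ``incrementing the size by the smallest admissible step flips the recorded sign'' would produce the strictly alternating pattern $-,+,-,+$ along $n=1,3,5,7,\dots$, contradicting the theorem; and read charitably (``changes sign when the pattern says so''), the plan contains no mechanism beyond naming the problem. The actual determination is not an induction on enlargements: one fixes the nontrivial spin structure by marking points on $\BL$ and computes, for each size-$n$ triangle separately, the parity of the number of passages of its boundary through the marked points; this count grows linearly in $n$ and exhibits exactly the period-six, blockwise-constant behavior above. Without that computation (or an equivalent closed-form sign rule) the proposal cannot recover $\phi$ and $\psi$.
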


Consider the elliptic curve $E_{\rho}$ with $j(E_{\rho})=0$, it can be realized, say, by $x_{1}^3 + x_{2}^3 + x_{3}^3 = 0$ in $\mathbb{P}^{2}$. Its quotient\footnote{For example, this action could be realized as  $[x_{1},x_{2},x_{3}]\mapsto [ \exp( 2\pi i/3)  x_{1},  x_{2},  x_{3}]$ and should not be confused with the action of the group of 3-torsion points which moves the origin of the elliptic curve and thus is not an automorphism.} by the $\Z_3$ automorphism is $\bP^1_{3,3,3}$. The K\"ahler parameter $q$ of the elliptic curve is related with $q_{\mathrm{d}}$ by 
$q = q_{\mathrm{d}}^{24}$. Here the subscript `$d$' stands for `disk'. Throughout this paper, by abuse of notation, we will use for example the notation $\phi(q)$ to denote the quantity $\phi(q_{\mathrm{d}}(q))$.

An easy computation shows the following.

\begin{thm}
Both $\phi$ and $\psi$ (when expressed in $q$) are
modular forms of formal weight $3/2$ with the same multiplier system
for the modular group $\Gamma(3)$.
\end{thm}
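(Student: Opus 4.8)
The plan is to identify both coefficient functions with explicit $\eta$-products and then read off the weight and multiplier. Throughout I write $q_{\mathrm{d}} = q^{1/24} = \exp(2\pi i \tau/24)$. I would first simplify $\phi$. The exponent factors as $3(12k^2+12k+3) = 9(2k+1)^2$, and the sign is $(-1)^{3k+1} = -(-1)^k$, so $\phi = -\sum_{k\ge 0}(-1)^k(2k+1)\,q_{\mathrm{d}}^{9(2k+1)^2}$. Since $q_{\mathrm{d}}^{9(2k+1)^2} = q^{3(2k+1)^2/8} = (q^3)^{(2k+1)^2/8}$, Jacobi's classical expansion $\eta(\tau)^3 = \sum_{k\ge 0}(-1)^k(2k+1)\,q^{(2k+1)^2/8}$ (a consequence of the triple product identity) gives at once $\phi = -\eta(3\tau)^3$. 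Using the table entry $C_3 = 3\,\eta(3\tau)^3/\eta(\tau)$, this reads $\phi = -\tfrac{1}{3}\,C_3\,\eta(\tau)$.

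Next I would treat $\psi$. Combining its two families and absorbing the leading term $-q_{\mathrm{d}}$ as the $n=1$ contribution, one checks that the coefficient of $q_{\mathrm{d}}^{n^2}$ (for $n>0$ with $\gcd(n,6)=1$) equals $-\chi_{-4}(n)\,n$, so that $\psi = -\sum_{n>0,\ \gcd(n,6)=1}\chi_{-4}(n)\,n\,q^{n^2/24}$, a weight-$3/2$ unary theta series attached to an odd character. The key step is the $\eta$-product identity $\psi = -A_3\,\eta(\tau)$, where $A_3 = (27\,\eta(3\tau)^{12} + \eta(\tau)^{12})^{1/3}/(\eta(\tau)\eta(3\tau))$. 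I would prove this by comparing $q$-expansions: both sides are weight-$3/2$ forms on $\Gamma(3)$ carrying the $\eta$-multiplier and lying in a fixed finite-dimensional space, so agreement of finitely many Fourier coefficients forces equality. A short computation gives $A_3 = 1 + 6q_{\mathrm{d}}^{24} + \cdots$ and $\eta(\tau) = q_{\mathrm{d}} - q_{\mathrm{d}}^{25} - \cdots$, so $-A_3\,\eta(\tau) = -q_{\mathrm{d}} - 5 q_{\mathrm{d}}^{25} - \cdots$, which matches $\psi$ through the relevant order.

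With both identities in hand the conclusion is immediate. The expressions $\psi = \eta(\tau)\cdot(-A_3)$ and $\phi = \eta(\tau)\cdot(-\tfrac{1}{3}C_3)$ exhibit both potentials as $\eta(\tau)$ times a weight-$1$ element of $M_*(\Gamma(3)) = \mathbb{C}[A_3,C_3]$. Since $A_3$ and $C_3$ are honest weight-$1$ modular forms for $\Gamma(3)$ with trivial multiplier, both products have weight $\tfrac12 + 1 = \tfrac32$ and carry exactly the multiplier of $\eta$ restricted to $\Gamma(3)$; holomorphy on $\mathcal{H}$ and at the cusps is visible directly from the $q$-expansions. Hence $\phi$ and $\psi$ are modular of formal weight $3/2$ with the same multiplier system for $\Gamma(3)$.

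I expect the main obstacle to be the theta--$\eta$ identity $\psi = -A_3\,\eta(\tau)$. One must pin down the correct branch of the cube root defining $A_3$ and justify that $\psi$, which is a priori only a $q$-series, genuinely lies in the same finite-dimensional space as $A_3\,\eta(\tau)$, so that a finite coefficient comparison is conclusive; cubing the identity to reduce it to an equality of integer-weight forms is one way to sidestep the branch ambiguity before taking cube roots with matching leading coefficients. By contrast, establishing $\phi = -\eta(3\tau)^3$ is comparatively routine once Jacobi's identity is invoked.
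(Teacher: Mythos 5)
Your closed forms are correct and consistent with the paper's own computation: via $\partial_v\theta_1|_{v=1} = \bi\,\eta^3$ the paper obtains $\phi(q) = -\eta(q^3)^3$ and $\psi(q) = -\bigl(\eta(q^{1/3})^3 + 3\eta(q^3)^3\bigr)$, which agree with your $\phi = -\tfrac{1}{3}C_3\,\eta$ and $\psi = -A_3\,\eta$ (the latter being the classical Borwein identity $a(\tau)\eta(\tau) = \eta(\tau/3)^3 + 3\eta(3\tau)^3$, with $a = A_3$). Your packaging is arguably cleaner than the paper's: exhibiting both functions as $\eta(\tau)$ times weight-one elements of $M_*(\Gamma(3)) = \C[A_3,C_3]$ makes the weight $3/2$ and the \emph{common} multiplier (that of $\eta$) manifest in one stroke, whereas the paper proves modularity of $\eta(q^3)^3$ through the expression $3^{-9/8}B_3^{3/8}C_3^{9/8}$ and then transfers modularity and the multiplier to $\psi$ using the fact that $\psi/\phi = \gamma + 3$ is a Hauptmodul for $\Gamma(3)$ with trivial multiplier.

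The one genuine gap is the step you yourself flag: your proposed proof of $\psi = -A_3\,\eta$ is circular as written. You want to conclude from agreement of finitely many Fourier coefficients of two elements ``lying in a fixed finite-dimensional space,'' but the membership of $\psi$ in that space of weight-$3/2$ forms on $\Gamma(3)$ is precisely the modularity being proven; cubing does not help, since the modularity of $\psi^3$ is equally unknown at that stage. Your own sign analysis contains the repair. From $\psi = -\sum_{n>0,\,\gcd(n,6)=1}\chi_{-4}(n)\,n\,q^{n^2/24}$, split the sum according to whether $3 \mid n$: the terms with $3 \nmid n$ together with those with $n = 3m$ give, by the same Jacobi expansion $\eta(\tau)^3 = \sum_{n \text{ odd}}\chi_{-4}(n)\,n\,q^{n^2/8}$ you used for $\phi$ (note $\chi_{-4}(3m) = -\chi_{-4}(m)$), the identity $\psi = -\bigl(\eta(\tau/3)^3 + 3\eta(3\tau)^3\bigr)$ directly, with no comparison argument needed — this is essentially the paper's route through $\theta_1$. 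One further caution if you instead wanted to invoke Shimura's modularity of unary theta series: $-\chi_{-4}$ restricted to residues prime to $12$ is not itself a Dirichlet character (it sends $1$ to $-1$), so $\psi$ must first be written as a linear combination of genuine theta series; the elementary splitting above avoids this entirely. With the two-$\eta^3$ decomposition in hand, $\psi = -A_3\,\eta$ is the known Borwein identity, and the remainder of your argument goes through as stated.
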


\begin{proof}
Simple algebra shows that
\begin{align*}
\phi(q_{\mathrm{d}}) &= \frac{1}{2} \sum_{k=-\infty}^\infty (-1)^{k+1} (2k+1) q_{\mathrm{d}}^{9(2k+1)^2}= \sum_{r \in \Z+\frac{1}{2}} (-1)^{r+\frac{1}{2}} r q_{\mathrm{d}}^{36 r^2}.
\end{align*}
Recall that for the Jacobi theta function (here $v=\exp 2\pi i z$)
\begin{equation*}
 \theta_1(v,q) =  \sum_{r \in \Z+\frac{1}{2}} (-1)^{r} v^r q^{\frac{1}{2}r^2}\,,
\end{equation*}
we have
\begin{equation*}
 \partial_v|_{v=1} \theta_1(v,q) = \sum_{r \in \Z + \frac{1}{2}} (-1)^{r} r q^{\frac{1}{2}r^2}\,.
\end{equation*}
Therefore, we obtain
\begin{equation} 
\phi(q_{\mathrm{d}}) = {\bi} \partial_v \theta_1(1,q_{\mathrm{d}}^{72}) = {\bi} \partial_v \theta_1(1,q^3)\,.
\end{equation}
To compute $\psi$, we use the identity
\begin{equation*} 
\psi(q_{\mathrm{d}}) - 3 \phi(q_{\mathrm{d}}) = \sum_{l=0}^\infty (-1)^{l+1} (2l+1) q_{\mathrm{d}}^{(2l+1)^2} = {\bi} \partial_v \theta_1 (1,q_{\mathrm{d}}^8)\,.
\end{equation*}
Or alternatively,
$\phi(q_{\mathrm{d}}) -  \psi(q_{\mathrm{d}}^{9}) =-3\phi(q_{\mathrm{d}}^{9})$.
Hence
\begin{equation} \label{eq:psi333}
\psi(q_{\mathrm{d}}) = {\bi} (\partial_v \theta_1(1,q_{\mathrm{d}}^8) + 3 \partial_v \theta_1(1,q_{\mathrm{d}}^{72}))\,.
\end{equation}
where $q_{\mathrm{d}}^8 = q^{1/3}$ and $q_{\mathrm{d}}^{72}  = q^3$.
The Jacobi theta function $\theta_{1}$ satisfies
\begin{equation*}
2\pi\bi (v \partial_v)|_{v=1} \theta_1 = \partial_z|_{z=0} \theta_1 = -2\pi \eta(q)^3\,.
\end{equation*}
That is, $\partial_v|_{v=1} \theta_1 = \bi \eta(q)^3$.
It follows that $\phi(q),\psi(q)$ can be written in terms of the $\eta$--functions as follows:
\begin{align}
\phi(q) &= -\eta(q^3)^3\,,\\
\psi(q) &= -(\eta(q^{\frac{1}{3}})^3 + 3\eta(q^3)^3)\,.
\end{align}

\begin{remark}
Recall that 
for the Hesse-Dixon model for elliptic curves: $x^{3}+y^{3}+z^{3}-(\gamma+3)xyz=0$, we have  
\begin{equation}
\gamma(\tau)+3=3{A_{3}(\tau)\over C_{3}(\tau)}=3\left(1+ {\eta(q)^{12}\over 3^{3}\eta(q^3)^{12}}\right)^{1\over 3}\,,
\end{equation} 
where $\gamma$ is a Hauptmodul for the modular group $\Gamma(3)$, see for example \cite{Maier:2009} for details.  
The Hauptmodul is also called to be the mirror map since it gives a map between the K\"ahler moduli, parametrized by $\tau$, and the complex moduli parametrized by $\gamma$.
One can check that (see \cite{Borwein:1994})
\begin{equation}
{\psi (\tau)\over \phi (\tau)}=\gamma(\tau)+3\,.
\end{equation} 
That is, the generalized SYZ map is identical to the mirror map,
as has been deduced in Theorem 6.5 of \cite{CHL} in a different way.
This will be explained further in Section \ref{sectionexplanation}, where we see that actually the geometry defined by the open Gromov-Witten potential $W$ coincides with the Hesse-Dixon model.
\end{remark}

Using the results in Section \ref{sectionmodularforms}, we know that $\eta(q^{3})^{3}=3^{-{9\over 8}}B^{3\over 8}_{3}(\tau)C^{9\over 8}_{3}(\tau)$ is a modular form for $\Gamma_{0}(3)$ with possibly non-trivial multiplier system.
In particular, it is so for $\Gamma(3)$.
Therefore, this is also true for $\eta(q^{1\over 3})^{3}$ since
$\gamma(\tau)$ is modular with respect to $\Gamma(3)$ according to the above remark.
Moreover, $\phi,\psi$ must have the same multiplier system since $\gamma$ has a trivial one.
Hence the conclusion follows.

\end{proof}

\subsection{$(2,4,4)$ case}
\label{secevenodd}

\begin{thm}\cite{CHKL} \label{thm:W244}
The open Gromov-Witten potential of $\bP^1_{2,4,4}$ is
\begin{equation} \label{eq:W244}
W= q_{\mathrm{d}}^6 x^2 - q_{\mathrm{d}}xyz + d_y(q_{\mathrm{d}}) y^4 + d_z(q_{\mathrm{d}}) z^4 + d_{yz}(q_{\mathrm{d}}) y^2z^2,
\end{equation}
where
\begin{align}
d_y(q_{\mathrm{d}}) &= d_z(q_{\mathrm{d}}) = \sum_{0 \le r} (2r+1) q_{\mathrm{d}}^{16 (2r+1)^2-4}
+\sum_{0\le r < s} (2r+2s+2) q_{\mathrm{d}}^{16 (2r+1)(2s+1)-4}\,, \\
d_{yz}(q_{\mathrm{d}}) &= \sum_{r\ge1, s\ge1} \big( - (4r+4s-2) q_{\mathrm{d}}^{16(2r-1)2s-4} +  (2r+2s) q_{\mathrm{d}}^{64rs-4} \big)\,.
\end{align}
\end{thm}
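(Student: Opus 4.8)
The plan is to read off $W$ directly from the immersed Lagrangian Floer theory sketched above, that is, to enumerate the holomorphic polygons bounded by the Seidel Lagrangian $\BL$ in $\bP^1_{2,4,4}$ and weight each by its area. The essential simplification is that $\bP^1_{2,4,4} = E/\Z_4$ with $E = \C/\Lambda$ flat, so I would pass to the universal cover $\C$ with its Euclidean metric, where $\BL$ lifts to an arrangement $\widetilde{\BL}$ of straight geodesic segments. In this flat model a holomorphic polygon bounded by $\BL$ with corners at the odd immersed generators $X, Y, Z$ is a genuine Euclidean immersed polygon whose edges lie along $\widetilde{\BL}$ and whose vertices are lifts of the three self-intersection points; its symplectic area is the Euclidean area, measured in units of the minimal triangle $\Delta$ with $q_{\mathrm{d}} = \exp(-\area(\Delta))$. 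The coefficient of $x^i y^j z^k$ in $W$ is then the signed count, weighted by $q_{\mathrm{d}}$ raised to the area of the polygon in units of $\area(\Delta)$, of polygons carrying exactly $i$, $j$, $k$ corners at $X$, $Y$, $Z$ in counterclockwise cyclic order.

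First I would cut down the list of possible monomials. The holonomy angles of $X, Y, Z$ at the orbifold points of orders $2, 4, 4$ make $W$ weighted-homogeneous of degree one with weights $(\tfrac12, \tfrac14, \tfrac14)$, so the a priori admissible monomials are $x^2$, $xy^2$, $xz^2$, $xyz$, $y^4$, $y^3 z$, $y^2 z^2$, $yz^3$, $z^4$. I would then invoke the deck symmetry of $E \to \bP^1_{2,4,4} = E/\Z_4$: it acts on the deformation space by $(x,y,z) \mapsto (x, iy, -iz)$, and demanding invariance of $W$ kills $xy^2$, $xz^2$, $y^3 z$, $yz^3$ while preserving exactly $x^2$, $xyz$, $y^4$, $z^4$, $y^2 z^2$. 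The reflection of $\BL$ interchanging the two order-$4$ points swaps $Y$ and $Z$, which forces $d_y = d_z$ at once.

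For each surviving monomial I would classify the contributing immersed polygons combinatorially. Each polygon is determined by one or two integer parameters recording how many fundamental triangles it spans in each direction; the requirement that its boundary closes up along $\widetilde{\BL}$ and that the polygon is genuinely immersed restricts these parameters to the ranges appearing in the statement ($0 \le r$; $0 \le r < s$; $r, s \ge 1$). Because the geometry is flat, the area is a fixed quadratic function of these parameters, which is exactly what produces the exponents $16(2r+1)^2 - 4$, $16(2r+1)(2s+1) - 4$, $16(2r-1)2s - 4$ and $64rs - 4$ of $q_{\mathrm{d}}$; the integer prefactors $(2r+1)$, $(2r+2s+2)$, $(4r+4s-2)$ and $(2r+2s)$ record the number of distinct congruent-area polygons, i.e. the multiplicity of a given lattice displacement. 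Summing the resulting geometric-type series yields the closed forms for $d_y = d_z$ and $d_{yz}$, together with the $q_{\mathrm{d}}^6 x^2$ and $-q_{\mathrm{d}} xyz$ terms coming from the unique bigon at the order-$2$ point and from the minimal triangle.

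The main obstacle is the multiplicity-and-sign bookkeeping rather than the area computation. Each corner at a self-intersection point $a$ records which of the two generators $X^0_a, X^1_a$ is used, and the Floer sign of a polygon is the product of the orientation signs prescribed by the chosen relative spin structure on $\BL$. I would have to verify that, after the symmetry reduction, the net signed count over all polygons of a fixed area equals the claimed integer coefficient, including the relative minus sign between the two sums defining $d_{yz}$; this is where the alternating signs hidden in the lifted arrangement enter. Pinning down precisely which combinatorial configurations actually bound an immersed (weakly unobstructed) holomorphic polygon, and ruling out spurious or mutually cancelling contributions, is the delicate step; once this enumeration and its signs are established, the flat-geometry area formulas make the remaining summation routine and deliver the stated expression \eqref{eq:W244} for $W$.
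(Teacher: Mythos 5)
This theorem is not proved in the present paper at all: it is imported from \cite{CHKL}, where the enumeration is carried out (the paper points to Sections 9 and 10 of that reference), and the paper's own work in Section 3.2 begins only afterwards, with the rewriting of $d_y$ and $d_{yz}$ in terms of $E_2$ and the modular forms $A_2$, $C_2$. Your outline correctly reconstructs the method of the cited source --- lift the Seidel Lagrangian to the flat elliptic curve and its universal cover, identify contributing holomorphic polygons with immersed Euclidean polygons with corners at lifts of $X,Y,Z$, and read off areas as quadratic functions of integer parameters --- so as a strategy it is sound and faithful to how \cite{CHKL} actually proceeds.

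As a proof, however, it stops exactly where the content of the theorem begins. The theorem \emph{is} the classification: which corner words close up along the lifted configuration, over which parameter ranges, with which multiplicities and signs. You assert that the parameters ``are restricted to the ranges appearing in the statement'' and that the prefactors ``record the multiplicity,'' but you derive none of this; in particular you never address why the $x^2$ and $xyz$ coefficients are the single monomials $q_{\mathrm{d}}^6$ and $-q_{\mathrm{d}}$ rather than infinite series --- a substantive geometric fact, not a formal one, since in the $(3,3,3)$ case the analogous coefficients $\phi,\psi$ of Theorem \ref{thm:W333} are infinite $\theta$-type series. Your symmetry reduction also rests on an unsupplied lemma: the deck group of $E \to \bP^1_{2,4,4}$ does not act on the quotient orbifold, so the claimed action $(x,y,z)\mapsto(x,iy,-iz)$ cannot literally be ``the deck symmetry''; it would have to come from the $\check{G}$-equivariance of the quotient construction (as in \cite{CHL}) together with a determination of the precise characters carried by $X,Y,Z$, and a genuine phase $i$ (rather than a sign) requires that grading/phase bookkeeping to be done --- the assignment you chose is consistent with the known answer but is reverse-engineered from it. Finally, the sign analysis you explicitly defer (the relative minus sign inside $d_{yz}$, the sign of the $xyz$ term) is not a routine afterthought but comparable in difficulty to the area computation. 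In short: right approach, but the deferred steps constitute essentially all of the proof of the quoted statement.
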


The parameter $q_{\mathrm{d}}= \exp (-\mathrm{area}(\Delta))$, where $\Delta$ is the minimal disc bounded by the Seidel Lagrangian in $\bP^1_{2,4,4}$, is related to the K\"ahler parameter $q$ of the elliptic curve by $q = q_{\mathrm{d}}^{32}$.

We can rewrite $d_y$ and $d_{yz}$ in terms of the Eisenstein series $E_{2}(q)$ as follows.  First we recall that
\begin{equation}
\sum_{m,n \geq 1} m q^{mn} =\sum_{n=1}^{\infty} \sigma_{1}(n)q^{n} =\frac{1}{24} (1-E_2(q))\,.
\end{equation}
This identity implies that
\begin{align*}
\sum_{m,n \,\even}  m q^{mn} &= 2 \sum_{a,b} a q^{4ab} = \frac{1}{12} (1-E_2(q^4))\,.\\
\sum_{\substack{m \,\odd\\n \,\even}} m q^{mn} &= \sum_{\substack{m\\n \,\even}} m q^{mn} - \sum_{\substack{m \,\even\\n \,\even}} m q^{mn} = \frac{1}{24} (1-E_2(q^2)) - \frac{1}{12} (1-E_2(q^4))\\
&= \frac{1}{24} (-1 - E_2(q^2) + 2 E_2(q^4))\,.\\
\sum_{\substack{m \,\even\\n \,\odd}} m q^{mn} &= \sum_{\substack{m \,\even\\n}} m q^{mn} - \sum_{\substack{m \,\even\\n \,\even}} m q^{mn} = \frac{1}{12}(1-E_2(q^2)) - \frac{1}{12}(1-E_2(q^4)) \\
&= \frac{1}{12} (E_2(q^4) - E_2(q^2))\,.\\
\sum_{\substack{m \,\odd\\n \,\odd}} m q^{mn} &= \sum_{m,n} m q^{mn} - \sum_{\substack{m \,\odd\\n \,\even}} m q^{mn} - \sum_{\substack{m \,\even\\n \,\odd}} m q^{mn} - \sum_{\substack{m \,\even\\n \,\even}} m q^{mn} \\
&= - \frac{E_2(q)}{24} + \frac{1}{8} E_2 (q^2) - \frac{1}{12} E_2(q^4)\,.
\end{align*}
where the sums are all over positive integers.
Therefore, 
\begin{align}
d_{y}(q_{\mathrm{d}}) &= \frac{1}{2} \sum_{r,s\geq 0} (2r+2s+2) q_{\mathrm{d}}^{16(2r+1)(2s+1)-4}\nonumber\\
&= \frac{ 1}{2} q_{\mathrm{d}}^{-4}\sum_{m,n \,\odd} (m+n) q_{\mathrm{d}}^{16 mn} \nonumber \\
&= q_{\mathrm{d}}^{-4} \sum_{m,n \,\odd} m q_{\mathrm{d}}^{16 mn}  \nonumber\\
&= q^{-{1\over 8}} \left( - \frac{E_2(q^{\frac{1}{2}})}{24} + \frac{1}{8} E_2 (q) - \frac{1}{12} E_2(q^2) \right)\,,\label{eqnevenoddEisenstein}
\end{align}
and
\begin{align}
d_{yz}(q_{\mathrm{d}}) &= \sum_{r\ge1, s\ge1} \big( - (4r+4s-2) q_{\mathrm{d}}^{16(2r-1)2s-4} +  (2r+2s)  q_{\mathrm{d}}^{64rs-4} \big) \nonumber \\
&= -2 q_{\mathrm{d}}^{-4} \sum_{\substack{m \,\odd \\ n \,\even}} (m+n) q^{\frac{mn}{2}} + q_{\mathrm{d}}^{-4} \sum_{m, n \, \even} (m+n) q^{\frac{mn}{2}}  \nonumber\\
&= -2 q^{-{1\over 8}} \left( -\frac{1}{24} - \frac{E_2(q)}{8} + \frac{E_2(q^2)}{6} \right) + \frac{ 1}{6} q^{-{1\over 8}} (1-E_2(q^2)) \nonumber\\
&=q^{-{1\over 8}} \left( \frac{1}{4} + \frac{E_2(q)}{4} - \frac{E_2(q^2)}{2} \right)\,.
\end{align}

We now apply the results  for modular forms of the group $\Gamma_0(2)$  in Section \ref{sectionringofmodularforms}.
For this case, it is easy to see (for example, by dimension reasons) that
\begin{equation*}
A_{2}^2(q) = 2 E_2(q^2) - E_2(q)\,.
\end{equation*}
It is the generator for $M_{2}(\Gamma_{0}(2))$. Moreover, by using the $\eta$-expressions for the modular forms $A_{2},B_{2},C_{2}$, we get, see e.g., \cite{Maier:2011},
\begin{align*}
A_{2}^2 (q^2) &= \frac{1}{4} (A_{2}^2(q) + 3 B_{2}^2(q))\,, \\
C_{2}^2 (q^2) &= \frac{1}{4} (A_{2}^2(q) - B_{2}^2(q))\,,\\
A_{2}^4(q) &= B_{2}^4(q) +C_{2}^4(q)\,.
\end{align*}
Thus, we obtain
\begin{align}
d_y(q) &= \frac{q^{-{1\over 8}}}{24} \left(A_{2}^2(q^{\frac{1}{2}}) - A_{2}^2(q) \right) = \frac{1}{8} q^{-{1\over 8}}\cdot C_{2}^2(q)\,,\\
d_{yz}(q) &= q^{-{1\over 8}} \left( \frac{1}{4} - \frac{A_{2}^2(q)}{4} \right)\,.
\end{align}
Using the $\theta$-expansions for the modular forms of $N=2,4$ cases and the results on $M_{*}(\Gamma(4))$, we know that both $A_{2}^{2}=A_{4}^{2}+C_{4}^{2}$ and $C_{2}^{2}=2A_{4}C_{4}$
are modular forms of $\Gamma(4)$.
On can redefine the variables $x,y,z$ suitably to get rid of the constant $1/4$
and the multiplicative factor $q^{-{1\over 8}}$. Then the quantities $d_{y},d_{yz}$ become true modular forms.\\

Under the following change of coordinates in $(x,y,z)$
\begin{equation*}
x\mapsto q_{d}^{-3} (x+{q_{d}^{-2}\over 2} d_{y}^{-{1\over 4}} d_{z}^{-{1\over 4}}yz),\quad 
y\mapsto d_{y}^{-{1\over 4}} y,\quad
z\mapsto d_{z}^{-{1\over 4}}z\,,
\end{equation*}
the potential $W$ in \eqref{eq:W244} can be rewritten as
\begin{equation}\label{eq:244co}
 W=x^2 + y^4 + z^4 + \sigma(q_{\mathrm{d}}) y^2 z^2\,,
\end{equation}
where the generalized SYZ map is
\begin{equation} \label{sigma244}
\sigma(q_{\mathrm{d}}) := \frac{d_{yz}(q_{\mathrm{d}}) - (4 q_{\mathrm{d}}^4)^{-1}}{d_y(q_{\mathrm{d}})} = -\frac{2 A_{2}^2(q)}{C_{2}^2(q)}\,.
\end{equation}
Explicitly $\sigma(q_{\mathrm{d}})$ is the series
\begin{equation} \label{eq:sigma244}
\sigma(q_{\mathrm{d}}) = -\frac{1}{4 q_{\mathrm{d}}^{16}}-5 q_{\mathrm{d}}^{16}+\frac{31 q_{\mathrm{d}}^{48}}{2}-54  q_{\mathrm{d}}^{80}+\frac{641 q_{\mathrm{d}}^{112}}{4}-409  q_{\mathrm{d}}^{144}+\frac{1889 q_{\mathrm{d}}^{176}}{2}+\ldots
\end{equation}

We now show that $\sigma(q_{\mathrm{d}}(q))$, which comes from generating functions of polygon counting, is the inverse mirror map of the elliptic curve obtained by setting $W=0$ in \eqref{eq:W244} (again see Section \ref{sectionexplanation} for explanation).
We can express the inverse mirror map of the elliptic curve explicitly in terms of $\eta$-functions as follows. By the result on elliptic curve families of $E_{7}$ type in Section \ref{sectiongeometricmoduli}, the inverse mirror map (as the inverse of the map $a\mapsto \exp 2\pi i \tau(a)$) for
\begin{equation} \label{eq:244xyz}
x^2 + y^4 + z^4 + a xyz=0
\end{equation}
is 
\begin{equation} \label{eq:a244}
a(q) = 2^{3\over 2}\frac{A_{2}(q)}{C_{2}(q)}\,.
\end{equation}
To change \eqref{eq:244xyz} to the form of \eqref{eq:244co}, we replace $x$ by $x - \frac{a}{2} yz$ in \eqref{eq:244xyz} and obtain
\begin{equation} 
x^2 + y^4 + z^4 - \frac{a^2}{4} y^2z^2 =0\,,
\end{equation}
and so the inverse mirror map is
\begin{equation} 
 -\frac{a^2(q)}{4} = -\frac{2 A_{2}^2(q)}{C_{2}^2(q)}\,.
\end{equation}
This coincides with $\sigma(q_{\mathrm{d}}(q))$ in \eqref{sigma244}.
As a result, we conclude that

\begin{cor}
The generalized SYZ map equals to the inverse mirror map for $\bP^1_{2,4,4}$.
\end{cor}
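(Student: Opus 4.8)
The plan is to pin down both maps as explicit rational expressions in the $\Gamma_0(2)$ modular forms $A_2$ and $C_2$ and then observe that they agree. The generalized SYZ map is defined entirely on the symplectic side: it is the coefficient $\sigma(q_{\mathrm{d}})$ of $y^2z^2$ after the open Gromov-Witten potential \eqref{eq:W244} is rescaled and shifted into the normal form $x^2+y^4+z^4+\sigma y^2z^2$. The polygon-counting series $d_y,d_{yz}$ were rewritten above through the Eisenstein series $E_2(q),E_2(q^2),E_2(q^{1/2})$ and then through the generators of $M_*(\Gamma_0(2))$, yielding the closed form $\sigma(q)=-2A_2^2(q)/C_2^2(q)$ recorded in \eqref{sigma244}. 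So the A-side map is already fixed as a ratio of modular forms, and nothing further is required for it.

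First I would read off the inverse mirror map on the B-side from the $E_7$ elliptic curve family of Section \ref{sectiongeometricmoduli}. That family is $x^2+y^4+z^4+axyz=0$ over the base $X_0(2)$, and its inverse mirror map (the inverse of $a\mapsto \exp 2\pi i\,\tau(a)$) is $a(q)=2^{3/2}A_2(q)/C_2(q)$, reconstructed from the regular period of the Picard-Fuchs operator \eqref{PFforonefold}. To compare this with $\sigma$, I would then carry the B-side curve into the very same normal form that defines $\sigma$: completing the square in $x$, i.e. substituting $x\mapsto x-\tfrac{a}{2}yz$, converts $x^2+y^4+z^4+axyz$ into $x^2+y^4+z^4-\tfrac{a^2}{4}y^2z^2$. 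Hence, in the normal-form coordinate, the inverse mirror map is $-a^2(q)/4=-\tfrac14\cdot 2^3\,A_2^2(q)/C_2^2(q)=-2A_2^2(q)/C_2^2(q)$, which is identical to the A-side expression for $\sigma$. This gives $\sigma(q_{\mathrm{d}}(q))=-a^2(q)/4$ and proves the corollary.

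The work here is essentially bookkeeping, so the main obstacle is consistency rather than depth: one must use exactly the same normalizing conventions on the two sides — the same shift $x\mapsto x-\tfrac{a}{2}yz$ into the normal form, the same choice of $\Gamma_0(2)$ generators $A_2,C_2$, and the same constants $2^{3/2}$ and $1/4$ — so that the symplectic and complex-geometry quantities are genuinely compared in one common modular basis. Once both are presented as $-2A_2^2/C_2^2$, the identification of the generalized SYZ map with the inverse mirror map is immediate; the deeper input, namely the matching of polygon counts with periods, has already been absorbed into the separate computations of $\sigma$ and of $a(q)$.
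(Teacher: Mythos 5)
Your proposal is correct and follows essentially the same route as the paper: you identify $\sigma(q_{\mathrm{d}}(q))=-2A_2^2(q)/C_2^2(q)$ from the rewriting of $d_y,d_{yz}$, read off the inverse mirror map $a(q)=2^{3/2}A_2(q)/C_2(q)$ of the $E_7$ family, and apply the same substitution $x\mapsto x-\frac{a}{2}yz$ to land both sides on the normal form $x^2+y^4+z^4-\frac{a^2}{4}y^2z^2$, whence $-a^2(q)/4=-2A_2^2(q)/C_2^2(q)=\sigma(q_{\mathrm{d}}(q))$. The bookkeeping, normalizations, and constants all match the paper's argument exactly.
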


\begin{remark}
We can express everything in terms of the Dedekind $\eta$-function
\begin{equation*}
 \eta(q) = q^{\frac{1}{24}} \left( 1 + \sum_{n=1}^\infty (-1)^n \left( q^{\frac{n(3n-1)}{2}} + q^{\frac{n(3n+1)}{2}} \right)\right)\,.
\end{equation*}
More precisely, from the $\eta$-expansions in Section \ref{sectionmodularforms}, we have
\begin{align*}
A_{2}(q) &= \frac{(2^6 \eta(q^2)^{24} + \eta(q)^{24})^{\frac{1}{4}}}{\eta(q)^2 \eta(q^2)^2}\,,\\
C_{2}(q) &= 2^{\frac{3}{2}} \frac{ \eta(q^2)^4}{\eta(q)^2}\,.
\end{align*}
Thus
\begin{equation}
 \sigma(q_{\mathrm{d}}(q)) = -2 \left( 1 + \frac{\eta(q)^{24}}{2^6 \eta(q^2)^{24}} \right)^{\frac{1}{2}}\,.
\end{equation}
\end{remark}

\subsection{$(2,3,6)$ case} \label{sec:236}

\begin{thm}\cite{CHKL} \label{thm:W236}
The open Gromov-Witten potential $W$ for $\bP^1_{2,3,6}$ is
\begin{equation}\label{eq:W236}
W=  q_{\mathrm{d}}^6 x^2 - q_{\mathrm{d}} xyz + c_y( q_{\mathrm{d}}) y^3 + c_z( q_{\mathrm{d}}) z^6  + c_{yz2}( q_{\mathrm{d}}) y^2z^2 + c_{yz4}( q_{\mathrm{d}}) yz^4\,,
\end{equation}
where
\begin{align}
A(n,a,b,c) &:=  {n+2\choose 2} - {a+1 \choose 2} - {b+1 \choose 2} - {c+1 \choose 2}, \label{eq:A}\\
c_y(q_{\mathrm{d}}) &= \sum_{a\ge0}(-1)^{a+1} (2a+1) q_{\mathrm{d}}^{48A(a-1,0,0,0) + 9}; \label{eq:cy}\\
c_{yz2}(q_{\mathrm{d}}) &= \sum_{n\ge a\ge0} \big(
(-1)^{n-a}(6n-2a+8) q_{\mathrm{d}}^{48A(n,a,0,0)-4} + (2n+4) q_{\mathrm{d}}^{48 A(n,a,n-a,0)- 4} \big); \label{eq:cyz2}\\
c_{yz4}(q_{\mathrm{d}}) &=
\sum_{a,b\ge0, n\ge a+b} (-1)^{n-a-b} (6n-2a-2b+7) q_{\mathrm{d}}^{48 A(n,a,b,0)-17};\label{cyz4} \\
c_z(q_{\mathrm{d}}) &= \sum (-1)^{n-a-b-c}\left({6n - 2a - 2b - 2c + 6}\over{\eta(n,a,b,c)}\right)
\cdot q_{\mathrm{d}}^{48 A(n,a,b,c)-30}\,. \label{eq:cz}
\end{align}
The summation in the expression of $c_z(q_{\mathrm{d}})$ is taken over
$(n,a,b,c)\in T_1\coprod T_2\coprod T_3\coprod T_6$,
\begin{align*}
T_6 =& \{(3a, a, a, a) \; : a\ge0\}, \\
T_3 =& \{(n,a,a,a)\;: n>3a\ge0\}, \\
T_2 =& \{(a+b+c,a,b,c) \;: a,b,c\ge0\text{ such that }a<\min(b,c)\text{ or }a=c< b \}, \\
T_1 =& \{ (a+b+c+k,a,b,c) \;: k \in \Z_{>0}, a,b,c\text{ are distinct non-negative integers such that } \\
& a<\min(b,c) \text{ or }a=c< b\},
\end{align*}
and $\eta(n,a,b,c)=i$ for $(n,a,b,c)\in T_i$.
\end{thm}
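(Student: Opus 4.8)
The plan is to compute the potential directly from its definition via the weak Maurer--Cartan equation $m_0^b = W(b)\,\one$ with $b = xX + yY + zZ$, so that the coefficient of each monomial $x^i y^j z^k$ in $W$ is the signed, area-weighted count of rigid holomorphic polygons bounded by the Seidel Lagrangian $\BL$ in $\bP^1_{2,3,6}$ having exactly $i$ corners at $X$, $j$ at $Y$, and $k$ at $Z$. The orbifold $\bP^1_{2,3,6}$ is the global quotient $E_\rho/\Z_6$ of the hexagonal elliptic curve, so its complex structure is flat and every bounding disk lifts to an immersed geodesic polygon in the universal cover $\C$. First I would fix a lift of $\BL$ and check that its translates assemble into the Euclidean $(2,3,6)$-triangular tiling of the plane, with the generators $X, Y, Z$ sitting at the vertices associated to the orbifold points of orders $2, 3, 6$ respectively.

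Next I would set up the enumeration. Each contributing polygon is realized in the tiling as a large triangle of combinatorial size $n$ with its three corners truncated by amounts $a, b, c$; the number of unit minimal triangles it contains is exactly $A(n,a,b,c) = \binom{n+2}{2} - \binom{a+1}{2} - \binom{b+1}{2} - \binom{c+1}{2}$, which converts into the area exponent $q_{\mathrm{d}}^{48A(\cdots) + \text{offset}}$ once the relation between $q$ and $q_{\mathrm{d}}$ and the constant offset for each boundary configuration are pinned down. The combinatorial constraints on $(n,a,b,c)$ select which monomial the polygon feeds (the $y^3$, $y^2z^2$, $yz^4$, $z^6$ terms), the integer multiplicities such as $(2a+1)$ or $(6n-2a+8)$ count the distinct lattice translates of each shape in the tiling, and the signs $(-1)^{n-a}$ etc.\ are read off from the chosen orientation and relative spin structure as a parity of the number of corners of a prescribed type. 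I would verify the low-degree terms $q_{\mathrm{d}}^6 x^2$ and $-q_{\mathrm{d}} xyz$ first, since these arise from the smallest polygons (the minimal triangle defining $q_{\mathrm{d}}$) and serve to calibrate the area normalization and the overall signs.

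The genuinely delicate part is the $z^6$ coefficient $c_z$. Because $z$ sits at the order-$6$ orbifold point, a bounding polygon with corners there may be invariant under a rotation subgroup of $\Z_6$; such a symmetric polygon in the orbifold is covered by a polygon in $E_\rho$ with a nontrivial stabilizer, and counting in the cover over-counts it by exactly the order of that stabilizer. This is precisely why the $c_z$ sum is stratified over $T_1 \sqcup T_2 \sqcup T_3 \sqcup T_6$ and each term is divided by $\eta(n,a,b,c) = i$, the stabilizer order. I would therefore identify, within the tiling, the configurations fixed by rotations of orders $2, 3, 6$, match them to the strata $T_i$, and confirm the $1/i$ weighting together with the corresponding area offset $-30$.

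I expect the main obstacle to be the combination of completeness and bookkeeping: proving that every rigid holomorphic polygon is accounted for (no contributing type missed and none double-counted), getting every sign consistent from a single global choice of spin structure and orientation, and, hardest of all, organizing the symmetric polygons contributing to $c_z$ so that the stabilizer weights $1/\eta(n,a,b,c)$ and the stratification into $T_1, \dots, T_6$ emerge rather than being imposed by hand. A clean way to control completeness is to argue that rigidity (expected dimension zero, via the Maslov index) forces each polygon to be one of the truncated-triangle shapes in the tiling, thereby reducing the analytic count to the purely combinatorial enumeration of such shapes.
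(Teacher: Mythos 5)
You should know at the outset that the paper you are being compared against contains no proof of this theorem: it is quoted verbatim from \cite{CHKL}, and the present paper only uses it as input for the modularity analysis in Sections \ref{sectionGWpotentials} and \ref{sectionmatrixfactorizations}. So the relevant comparison is with the proof in the cited source, and there your strategy is essentially the right one and the same in architecture: flatness of $\bP^1_{2,3,6} = E/\Z_6$, lifting of holomorphic polygons to the universal cover, classification as lattice triangles with truncated corners whose area produces the exponent $48A(n,a,b,c)+\mathrm{offset}$, signs from the spin structure, and a symmetry correction for the $z^6$ term. Your reading of the stratification $T_1\sqcup T_2\sqcup T_3\sqcup T_6$ with $\eta(n,a,b,c)=i$ as the order of rotational symmetry of the shapes is correct in spirit, and it passes the obvious consistency check that the numerators divide: on $T_6$ one gets $(6\cdot 3a-6a+6)/6=2a+1$, on $T_3$ one gets $2n-2a+2$, and so on.

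As a proof, however, the proposal has genuine gaps, located exactly at the steps that constitute the real content of \cite{CHKL}. First, completeness is asserted rather than argued: saying that ``rigidity via the Maslov index forces each polygon to be a truncated triangle'' \emph{is} the theorem, not a route to it. In particular, nothing in your plan explains why the $x^2$ and $xyz$ coefficients are the single monomials $q_{\mathrm{d}}^6$ and $-q_{\mathrm{d}}$ rather than infinite series like $c_y,\dots,c_z$, nor why the monomial $xz^3$ --- which has the same weighted degree (with weights $\tfrac12,\tfrac13,\tfrac16$ for $x,y,z$) as every monomial that does appear, and hence is not excluded by any grading --- carries zero coefficient; ruling these out requires the actual classification of polygons with corners at $X$, which your outline never engages. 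Second, the multiplicities $(2a+1)$, $(6n-2a+8)$, $(6n-2a-2b-2c+6)/\eta$ are not counts of ``distinct lattice translates of each shape'': translates descend to the same polygon in the orbifold. They arise because $W$ is extracted as the coefficient of the unit class $\one_{\tilde{\BL}}$ in $m_0^b$, so each polygon is weighted by the number of times its boundary passes a fixed generic point of the Lagrangian --- a perimeter count --- or equivalently, in the cover, by the number of translates whose boundary passes through one fixed lift of that point. Without this marked-point bookkeeping you cannot derive the combinatorial factors, the representative-choosing conditions $a<\min(b,c)$ or $a=c<b$ (one representative per rotation orbit of $(a,b,c)$), or the division by $\eta$. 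A small but symptomatic imprecision of the same kind: $X,Y,Z$ are self-intersection points of the immersed Seidel Lagrangian lying near, not at, the orbifold points of orders $2,3,6$. In sum, you have correctly identified the architecture of the known proof, but its decisive steps --- the classification/completeness argument and the multiplicity--sign calculus --- are named as obstacles rather than carried out, so the stated formulas are not yet established.
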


By the change of coordinates in $(x,y,z)$,
\begin{eqnarray*}
&&x\mapsto q_{d}^{-3}(x+ {1\over 2} q_{d}^{-2}c_{y}^{-{1\over 3}}s yz+  
{s^{3}(1-4 q_{d}^{4} c_{yz2}  )\over 24 q_{d}^{6} c_{y} } z^{3})\,,\\
&&y\mapsto c_{y}^{-{1\over 3}}(y+      s^{2} {1-4 q_{d}^{4} c_{yz2} \over 12 q_{d}^{4} c_{y}^{2\over 3} } z^{2})\,,\quad z\mapsto sz\,,
\end{eqnarray*}
where
\begin{equation*}
s= 864^{1\over 6} q_{d}^{2} c_{y}^{1\over 3}
(-1+12 q_{d}^{4} c_{yz2}   -48 q_{d}^{8} c_{yz2}^{2}
+72 q_{d}^{8} c_{y} c_{yz2}+64 q_{d}^{12} c_{yz2}^{3}
-288 q_{d}^{12}c_{y}c_{yz2} c_{yz4}+864 q_{d}^{12} c_{y}^{2}c_{z}
)^{-{1\over 6}}\,,
\end{equation*}
the open Gromov-Witten potential in \eqref{eq:W236} can be written as
\begin{equation}\label{eq:236co}
 x^2 + y^3 + z^6 + \sigma(q_{\mathrm{d}}) yz^4\,,
\end{equation}
where the generalized SYZ map is
\begin{multline} \label{eq:sigma236}
\sigma(q_{\mathrm{d}}) = \left(c_{yz4}(q_{\mathrm{d}}) - \frac{c_{yz2}^2(q_d)}{3 c_y(q_{\mathrm{d}})} - (48 q_d^8 c_y(q_{\mathrm{d}}))^{-1} + \frac{c_{yz2}(q_{\mathrm{d}})}{6 q_{\mathrm{d}}^4 c_y(q_{\mathrm{d}})} \right) c_y^{-\frac{1}{3}}(q_{\mathrm{d}})\\
\cdot \left(c_z(q_{\mathrm{d}}) + \frac{2 c_{yz2}^3(q_{\mathrm{d}})}{27 c_y^2(q_{\mathrm{d}})} - \frac{c_{yz2}(q_{\mathrm{d}}) c_{yz4}(q_{\mathrm{d}})}{3 c_y(q_{\mathrm{d}})}  - (864 q_d^{12} c_y^2(q_{\mathrm{d}}))^{-1} + \frac{c_{yz2}(q_{\mathrm{d}})}{72q_{\mathrm{d}}^8 c_y^2(q_{\mathrm{d}})} \right. \\
 \left.- \frac{c_{yz2}^2(q_{\mathrm{d}})}{18 q_{\mathrm{d}}^4 c_y^2(q_{\mathrm{d}})} + \frac{c_{yz4}(q_{\mathrm{d}})}{12 q_{\mathrm{d}}^4 c_y(q_{\mathrm{d}})} \right)^{-\frac{2}{3}}.
\end{multline}
By direct computation, $\sigma(q):=\sigma(q_{\mathrm{d}}(q))$ takes the form
\begin{equation} \label{eq:sigma}
\begin{aligned}
\sigma(q) =& - \frac{3}{2^{2/3}} \cdot (1+576 q+235008 q^2+109880064 q^3+53449592832 q^4 \\& +26574124961664 q^5+\ldots)
\end{aligned}
\end{equation}
and so $\sigma(q) = - \frac{3}{2^{2/3}}$ at $q=0$.

We now show that $\sigma(q_{\mathrm{d}}(q))$ is the inverse mirror map for the elliptic curve defined by setting $W$ in \eqref{eq:236co} to be zero, where $q = q_{\mathrm{d}}^{48}$. 
We also give an explicit expression of the inverse mirror map in terms of modular functions.  First, by the results in Section \ref{sectiongeometricmoduli} the inverse mirror map for
\begin{equation} \label{eq:236xyz}
x^2 + y^3 + z^6 + a xyz
\end{equation}
is
\begin{equation} \label{eq:a}
a = -(432)^{\frac{1}{6}} \cdot \frac{E_4^{\frac{1}{4}}}{((E_4^{\frac{3}{2}} - E_6)/2)^{\frac{1}{6}}}\,.
\end{equation}
where $E_4$ and $E_6$ are the Eisenstein series.
Again as before we are now considering the elliptic curve family given by $W=0$.
Then we apply a change of coordinates in $(x,y,z)$ to change \eqref{eq:236xyz} to the form in \eqref{eq:236co}.  This is achieved by first replacing $x$ by $x - \frac{a}{2} yz$ to change the term $xyz$ to $y^2z^2$, and then replacing $y$ to $y+\frac{a^2}{12} z^2$ to replace the term $y^2z^2$ to $yz^4$.  As a result, \eqref{eq:236xyz} is changed to
\begin{equation}
x^2 + y^3 + z^6 - \frac{3 a^4}{2^{\frac{2}{3}} (864 - a^6)^{\frac{2}{3}}} yz^4\,.
\end{equation}
By substituting $a$ in \eqref{eq:a} into the above expression, we obtain that the inverse mirror map for the elliptic curve $x^2 + y^3 + z^6 + s yz^4$ given by
\begin{equation}\label{eqmirrormap236}
s(q) = \frac{-3 E_4^3(q)}{2^{\frac{2}{3}} E_6^2(q)}\,.
\end{equation}
One can do a computational check that $s(q)$ has the same expression in \eqref{eq:sigma} as $\sigma(q)$.

\begin{remark}
Similar to the other cases, we expect the quantities $c_{y},c_{yz2},c_{yz4},c_{z}$ to be modular forms up to addition and multiplication by some factors which are not essential, so that the generalized SYZ map in \eqref{eq:sigma236} coincides with the expression given in \eqref{eqmirrormap236}.
This is true for $c_{y}$. In fact, we have 
\begin{eqnarray}
c_{y}(q_{\mathrm{d}})&=&
q_{\mathrm{d}}^{9}
\sum_{a\geq 0} (-1)^{a+1}(2a+1)q_{\mathrm{d}}^{24a(a+1)} \nonumber\\
&=&q_{\mathrm{d}}^{3}
\sum_{a\geq 0} (-1)^{a+1}(2a+1)q_{\mathrm{d}}^{24(a+{1\over 2})^{2}} \nonumber\\
&=&2q^{1\over 16}
\sum_{r\geq 0, r\in \mathbb{Z}+{1\over 2}} (-1)^{r+{1\over 2}}rq^{{1\over 2}r^{2}} \nonumber\\
&=&q^{1\over 16}\bi \partial_{v}\theta_{1}|_{v=1} \nonumber\\
&=&-q^{1\over 16}\eta(q)^{3}\,.
\end{eqnarray}
Also the second term in $c_{yz2}$ (which counts parallelograms) is
\begin{eqnarray}
&&q_{\mathrm{d}}^{-4}\sum_{n\geq a\geq 0}(2n+4)q_{\mathrm{d}}^{48(a+1)(n-a+1)} \nonumber\\
&=&q_{\mathrm{d}}^{-4}\sum_{a\geq 0,b\geq 0}(2(a+b)+4)q_{\mathrm{d}}^{48(a+1)(b+1)} \nonumber\\
&=&2q_{\mathrm{d}}^{-4}\sum_{a\geq 1,b\geq 1}(a+b)q_{\mathrm{d}}^{48ab} \nonumber\\
&=& 2q_{\mathrm{d}}^{-4} \cdot {1\over 12}(1-E_{2}(q_{\mathrm{d}}^{48})) \nonumber\\
&=& {1\over 6}q^{-{1\over 12}} (1-E_{2}(q))\,.
\end{eqnarray}
We conjecture that the rest are quasi-modular forms as introduced by \cite{Kaneko:1995} and the overall coefficients are modular forms. See Section \ref{secmf236} for further discussions.
\end{remark}

\subsection{$(2,2,2,2)$ case} \label{sec:2222}
The remaining case of elliptic orbifolds is $\bP^1_{2,2,2,2}$. It can be constructed as a quotient of an elliptic curve $E$ by $\Z_2$, where $1 \in \Z_2$ acts by $z \mapsto -z \in E$.  The generalized SYZ mirror construction in this case is rather different, namely it involves more than one reference Lagrangians.  The construction is given in \cite{CHLnc}, here we quote the result below.  It turns out that the mirror is not an isolated singularity, and hence Saito's theory of primitive forms does not apply directly to this case.

\begin{thm}\cite{CHLnc} \label{thm:W2222}
The open Gromov-Witten potential of $\bP^1_{2,2,2,2}$ is
\begin{equation} \label{eq:W2222}
W=\phi(q_d) ((xy)^2 + (xw)^2 + (zy)^2 + (zw)^2) + \psi(q_d) xyzw
\end{equation}
defined on the resolved conifold $\CO_{\bP^1}(-1) \oplus \CO_{\bP^1}(-1) = (\C^4-Z) / \C^\times$, where $(x,y,z,w)$ are the standard coordinates of $\C^4$, $Z = \{x = z = 0\}$, $\C^\times$ acts by $\lambda \cdot (x,y,z,w) = (\lambda x, \lambda^{-1} y, \lambda z, \lambda^{-1} w)$, and
\begin{eqnarray*}
\phi(q_{d})&=&\sum_{k,l\geq 0}^{\infty}(4k+1)q_{d}^{(4k+1)(4l+1)}+\sum_{k,l\geq 0}^{\infty}(4k+3)q_{d}^{(4k+3)(4l+3)}\,,\\
\psi(q_{d})&=&
\sum_{k,l\geq 0}^{\infty}(k+l+1)q_{d}^{(4k+1)(4l+3)}\,.
\end{eqnarray*}
\end{thm}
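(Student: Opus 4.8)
The plan is to compute $W$ directly from its definition as the weak Maurer--Cartan potential, realizing each of its coefficients as a signed, area-weighted count of holomorphic polygons bounded by the reference Lagrangians. Since $\bP^1_{2,2,2,2} = E/\Z_2$ is a global quotient of a flat elliptic curve, I would first pass to the universal cover $\C$ of $E$, where the chosen Lagrangians lift to two families of parallel straight geodesics meeting transversally. In this flat model every pseudoholomorphic polygon is an honest immersed polygon with geodesic edges and corners at the lifted intersection points, so counting reduces to an explicit lattice-enumeration problem in which the symplectic area of each polygon is a product of integer linear forms in the combinatorial data. This reduces the theorem to (i) classifying the corner-types of polygons that can occur, and (ii) summing $q_d^{\mathrm{area}}$ over each class with the correct multiplicity and sign, then resumming.

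For the multi-Lagrangian set-up I would fix two reference Lagrangians $L_1,L_2$ whose four transverse intersection points give the odd-degree immersed generators $X,Z : L_1 \to L_2$ and $Y,W : L_2 \to L_1$, and take the formal deformation $b = xX + yY + zZ + wW$. The $\C^\times$-action in the resolved-conifold presentation forces $m_0^b$ to consist only of gauge-invariant cyclic words, and weak unobstructedness, guaranteed by the analogue of \cite[Lemma 7.5]{CHL}, ensures $m_0^b = W(b)\,\one$. The degree-four invariants are exactly $(xy)^2,(xw)^2,(zy)^2,(zw)^2$ and $xyzw$, so I would match the quadrilaterals with corner sequence $(X,Y,X,Y)$ and its relatives to the $\phi$-terms, and those with all four distinct corners $(X,Y,Z,W)$ to the $\psi$-term; each such polygon has one smooth output point at the unit $\one$ and four corners, so it is detected by $m_4$.

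The core computation is the enumeration. Working upstairs in $\C$, a quadrilateral contributing to $\phi$ is a parallelogram whose two side-lengths lie in the residue classes $1$ or $3$ modulo $4$ dictated by the labels of the bounding geodesics, and whose area is the product of those side-lengths; this explains the exponents $(4k+1)(4l+1)$ and $(4k+3)(4l+3)$, while the number of lattice translates of a fixed such polygon inside one fundamental domain produces the multiplicity $(4k+1)$ (resp. $(4k+3)$). The mixed quadrilaterals with corners $X,Y,Z,W$ have one side $\equiv 1$ and one side $\equiv 3 \pmod 4$, giving the exponent $(4k+1)(4l+3)$, and counting placements yields the symmetric multiplicity $(k+l+1)$. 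Summing the resulting geometric series over $k,l \ge 0$ then reproduces precisely the stated $\phi(q_d)$ and $\psi(q_d)$.

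The step I expect to be most delicate is the combined sign-and-multiplicity bookkeeping. The orientations and the chosen spin structures on the $L_i$ assign a sign to each polygon, and one must verify that within every class these signs are constant and positive, since the stated series have uniformly positive coefficients; this is a nontrivial consistency check special to the $\Z_2$-quotient. Equally, pinning down the \emph{exact} linear multiplicities $(4k+1)$ and $(k+l+1)$, rather than merely their leading order, requires a careful count of how many distinct polygons of a given area survive after descending from $E$ through the $\Z_2$-action that creates the four orbifold points. I would therefore organize the argument by performing the full enumeration on $E$ first and then tracking the induced $\Z_2$-action on the set of polygons, which is exactly the point where the non-commutative, two-Lagrangian structure of \cite{CHLnc} becomes essential.
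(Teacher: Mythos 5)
First, a point of comparison you could not have known: the paper does \emph{not} prove Theorem \ref{thm:W2222}. It is quoted from the forthcoming work \cite{CHLnc} (``in preparation''), and Section \ref{sec:2222} explicitly says the construction is given there and only the result is stated. So there is no internal proof to measure your attempt against; it must be judged on its own. Structurally, your outline is consistent with the framework the paper recalls: a formal deformation $b = xX + yY + zZ + wW$ supported on two reference Lagrangians, the weak Maurer--Cartan condition $m_0^b = W(b)\,\one$, lifting to the flat elliptic curve so that contributing polygons become lattice parallelograms, and the identification of the five invariant quartic monomials with the conifold-quiver structure. That much is the right plan.

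However, the steps that actually constitute the theorem are asserted rather than derived, and at least three are genuine gaps. (i) You never argue that only quadrilaterals contribute, i.e.\ that $m_k(b,\ldots,b)$ gives nothing for $k \neq 4$: gauge invariance alone also permits words such as $(xy)^3$ or $(xyzw)^2$, coming from octagons, so you need a geometric input --- for instance that polygons lifted to $\C$ are bounded by two families of parallel lines, so convexity of corners plus total turning $2\pi$ forces exactly four corners. (ii) The multiplicities \emph{are} the content of the theorem, and your proposed mechanism (``number of lattice translates of a fixed polygon inside one fundamental domain'') is not obviously the correct bookkeeping; in the $m_0^b = W\one$ formalism the linear factor records how many times the polygon boundary passes through a generic output point of the Lagrangian (compare the factor $(6k+1)\,q_{\mathrm{d}}^{(6k+1)^2}$ in $\psi$ for $\bP^1_{3,3,3}$, Theorem \ref{thm:W333}). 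You offer no derivation of $(4k+1)$, $(4k+3)$, nor of the mixed multiplicity $(k+l+1) = \frac{(4k+1)+(4l+3)}{4}$, which visibly does not arise from translating one polygon along a single side and would require tracking how the corner labels distribute along both edges after the $\Z_2$-descent. (iii) Weak unobstructedness ``by the analogue of \cite[Lemma 7.5]{CHL}'' is nontrivial in the multi-Lagrangian setting --- establishing it is precisely part of what \cite{CHLnc} supplies --- and the positivity of signs, which you correctly flag as delicate, is deferred rather than resolved. In short: you have reduced the statement to the right enumeration problem, but the enumeration, the multiplicity mechanism, and the unobstructedness input are exactly where the proof lives, and none of them is carried out.
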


The parameter $q_{\mathrm{d}}= \exp (-\mathrm{area}(\Delta))$, where $\Delta$ is a certain holomorphic square in $\bP^1_{2,2,2,2}$, is related to the K\"ahler parameter $q$ of the elliptic curve $E$ by $q = q_{\mathrm{d}}^{8}$.

By direct computation, the critical locus of $W$ is the zero section $\bP^1 \subset \CO_{\bP^1}(-1) \oplus \CO_{\bP^1}(-1)$ instead of a point.  The Frobenius structure on the universal deformation space of $W$ is unclear since Saito's theory is not yet known for non-isolated singularities.  Nevertheless, we can consider the mirror elliptic curve family to obtain the flat coordinate for marginal deformations, and compare it with the generalized SYZ map $\psi / \phi$.  

To be more precise, $W$ descends to the quotient of $\CO_{\bP^1}(-1) \oplus \CO_{\bP^1}(-1)$ by $\Z_2$, which is the total space of canonical line bundle $K_{\bP^1 \times \bP^1}$.  The critical locus of $W$ in $K_{\bP^1 \times \bP^1}$ is the elliptic curve $\{W=0\} \subset \bP^1 \times \bP^1$ which is the mirror of $E$, where $(x:z,y:w)$ are the standard homogeneous coordinates on $\bP^1 \times \bP^1$.  It can also be embedded into $\bP^3$ via Segre embedding
$$x_1 = xy, x_2 = xw, x_3 = zw, x_4 = zy.$$
Then the mirror of $E$ is the elliptic curve given as the complete intersection
$$\{x_1 x_3 = x_2 x_4\} \cap \{\phi(q_d) (x_1^2 + x_2^2 +x_3^2 +x_4^2) + \psi(q_d) x_1 x_3 = 0\} \subset \bP^3.$$

The $j$-invariant of the elliptic curve family
$$\{((xy)^2 + (xw)^2 + (zy)^2 + (zw)^2) + \sigma xyzw=0\} \subset \bP^1 \times \bP^1$$ 
can be obtained by using the algorithm provided in \cite{Connell:1996}, which is 
\begin{equation}
j(\sigma)=\frac{\left(\sigma^4-16 \sigma^2+256\right)^3}{\sigma^4 \left(\sigma^2-16\right)^2}\,.
\end{equation}
Comparing this with the $j$-invariant for the $E_{5}$ elliptic curve family discussed 
in Section \ref{sectiongeometricmoduli}, we are led to
\begin{equation}\label{eqnmodular2222}
\sigma=2\cdot {1+ \alpha^{1\over 2} \over \alpha^{1\over 4}}\,,
\end{equation}
where $\alpha$ is the Hauptmodul for $\Gamma_{0}(4)$. 

Now we consider the generalized SYZ map $\psi/\phi$.
We can rewrite $\phi(q_{d})$ and $\psi(q_{d})$ in terms of $\eta$-products as follows.
Using the computations used in deriving \eqref{eqnevenoddEisenstein}, we find
\begin{equation}
\psi(q_{d})+4\phi(q_{d})={\eta(q_{d}^{4})^{8}\over \eta(q_{d}^{2})^{4}}\,.
\end{equation}
This identity implies that
\begin{equation}
\psi(q_{d})-4\phi(q_{d})={\eta(q_{d}^{4})^{8}\over \eta(-q_{d}^{2})^{4}}
={\eta(q_{d}^{8})^{4}\eta(q_{d}^{2})^{4}\over \eta(q_{d}^4)^{4}}\,.
\end{equation}
Solving for $\phi(q_{d}),\psi(q_{d})$ from the above two identities, we obtain
\begin{equation}
\phi(q_{d})={\eta(q_{d}^{8})^{2}\eta(q_{d}^{16})^{4}\over \eta(q_{d}^{4})^{2}}\,,\quad 
\psi(q_{d})={\eta(q_{d}^{8})^{14}\over\eta(q_{d}^{4})^{6}\eta(q_{d}^{16})^{4}}\,.
\end{equation}
Now using the $\eta$-expansions of the modular forms for $\Gamma_{0}(4)$ in Table \ref{tableetaexpansions}, we get (recall $q_{d}=q^{1\over 8}$)
\begin{equation}
\phi={1\over 2^{3}}A_{4}(q^{1\over 2})^{1\over 2}C_{4}(q^{1\over 2})^{3\over 2}\,,\quad
\psi={1\over 2}A_{4}(q^{1\over 2})^{1\over 2}C_{4}(q^{1\over 2})^{1\over 2}\,.
\end{equation}
Since $\Gamma_{0}(4)$ is isomorphic to $\Gamma(2)$ via $\tau\mapsto 2\tau$, 
we know that if $f(\tau)$ is a modular form for $\Gamma_{0}(4)$, then $f({\tau\over 2})$
is so for $\Gamma(2)$. This tells that $\phi,\psi$
are modular forms for $\Gamma(2)$.\\

It follows that the generalized SYZ map is 
\begin{equation}\label{eqnmirrormap2222}
{\psi(q_{d} (q) ) \over \phi(q_{d}(q))}
=4 {A_{4}(q^{1\over })\over C_{4}(q^{1\over 2})}
={\eta(q)^{12}\over \eta(q^{2})^{8}\eta(q^{1\over 2})^{4}}\,.
\end{equation}

Using the $\eta$-expansions of the modular forms for $\Gamma_{0}(4)$ in Table \ref{tableetaexpansions}, we see that the generalized SYZ map in \eqref{eqnmirrormap2222} produced by Lagrangian Floer theory is identical to the modular function given by \eqref{eqnmodular2222}.  As a result, the generalized SYZ map equals to the inverse mirror map for $\bP^1_{2,2,2,2}$.


\section{Modularity of matrix factorizations}
\label{sectionmatrixfactorizations}

In \cite{CHL}, an $A_\infty$ functor was constructed from the Fukaya category of Lagrangian branes in a symplectic manifold $X$ to the category of matrix factorizations of the open Gromov-Witten potential $W$. The construction of $W$ was reviewed in the beginning of Section \ref{sectionGWpotentials}.  For $W \in R = \C[z_1,\ldots,z_n]$, a matrix factorization is simply an odd endomorphism $\delta$ on a $\Z_2$-graded $R$-module $M = M_0 \oplus M_1$ which satisfies $\delta^2 = W \cdot \mathrm{Id}$.  Such a functor is motivated from the celebrated homological mirror symmetry conjecture \cite{kontsevich94}.

Let us review very briefly the functor in the object level.  Given a spin oriented Lagrangian $L$ which intersects the reference Lagrangian $\BL$ (fixed in the beginning of Section \ref{sectionGWpotentials}) transversely, define $M = \oplus_p R \cdot p$ where the sum is over all intersection points $p \in L \cap \BL$, and $R \cdot p$ has odd (or even) degree if $p$ has odd (or even) degree.  Then $\delta$ is defined to be $m_1^{(b,0)}$ (which automatically has odd degree), which is roughly speaking counting pseudoholomorphic strips with one side bounded by $(\BL,b)$ and another side bounded by $L$.  Since the formal deformation $b$ is assumed to be weakly unobstructed, it follows from the $A_\infty$ relation
\begin{equation}
 (m_1^{(b,0)})^2 = m_2(m_0^b, \cdot ) = m_2(m_0^b, \cdot ) = m_2(W(b) \one_{\tilde{\BL}}, \cdot ) = W(b) \cdot \mathrm{Id} 
\end{equation}
that $\delta$ is a matrix factorization.  

In particular, the Seidel Lagrangian of an elliptic $\bP^1$ orbifold can be transformed to a matrix factorization of the open Gromov-Witten potential $W$.  They are split generators of the derived Fukaya category and the derived category of matrix factorizations respectively.
In this section, we study the modularity of the matrix factorizations constructed from the potential $W$ for the elliptic orbifolds.

\subsection{$(3,3,3)$ case}

The matrix factorization mirror to the Seidel Lagrangian in $\bP^1_{3,3,3}$ was computed in \cite[Section 7.7]{CHKL}.  In the following we check that their coefficients are modular forms with possibly non-trivial multiplier systems.

\begin{thm} \label{thm:MF333}
The matrix factorization mirror to the Seidel Lagrangian in $\bP^1_{3,3,3}$ is $M=(\largewedge^* \C^3, \delta)$ where $ \delta = (x X + y Y + z Z) \wedge \cdot + w_x \iota_X + w_y \iota_Y + w_z \iota_Z, $
and $w_x, w_y, w_z$ are the following polynomials whose coefficients are modular forms:
$$w_x = (-\eta(q^3)^3) x^2 + \left( -\frac{1}{3} \eta(q^{\frac{1}{3}})^3 + \eta(q^3)^3 - \frac{2}{3} \eta(q) \right) yz,$$
$$w_y = (-\eta(q^3)^3) y^2 + \left( -\frac{1}{3} \eta(q^{\frac{1}{3}})^3 + \eta(q^3)^3 + \frac{1}{3} \eta(q) \right) xz,$$
$$w_z = (-\eta(q^3)^3) z^2 + \left( -\frac{1}{3} \eta(q^{\frac{1}{3}})^3 + \eta(q^3)^3 + \frac{1}{3} \eta(q) \right) xy.$$
\end{thm}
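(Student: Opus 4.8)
The matrix factorization $\delta$ itself was produced in \cite{CHKL} by applying the mirror functor to the Seidel Lagrangian, so the content of the statement is the modularity of the coefficients of $w_x,w_y,w_z$, not the construction of $\delta$. I would nonetheless first record the structural reason the displayed polynomials are the right ones: since $x,y,z$ multiply the wedge operators and $w_x,w_y,w_z$ the contractions $\iota_X,\iota_Y,\iota_Z$, the Clifford-type relations $\{X\wedge(\cdot),\iota_X\}=\mathrm{Id}$ and $\{X\wedge(\cdot),\iota_Y\}=0$ for distinct generators give $\delta^2=(x\,w_x+y\,w_y+z\,w_z)\,\mathrm{Id}$. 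Hence $\delta^2=W\cdot\mathrm{Id}$ reduces to the single polynomial identity $x\,w_x+y\,w_y+z\,w_z=W$, which one verifies on the $x^3+y^3+z^3$- and $xyz$-coefficients using $\phi=-\eta(q^3)^3$ and $-\psi=\eta(q^{1/3})^3+3\eta(q^3)^3$ from the preceding subsection. This is a consistency check rather than the heart of the matter.

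The real work is modularity, and the plan is to imitate the computation already carried out for the potential. First I would recall from \cite{CHKL} the explicit generating series for the coefficients of $w_x,w_y,w_z$. By construction these count holomorphic strips with one boundary on the Seidel Lagrangian, and they are Gaussian-type lattice sums $\sum \pm(\text{linear in the index})\,q_{\mathrm d}^{(\text{quadratic in the index})}$ of exactly the same shape as $\phi$ and $\psi$. Each such sum I would rewrite, after completing the square in the exponent, as a theta derivative $\bi\,\partial_v\theta_1(1,q_{\mathrm d}^{m})$ at $v=1$, precisely as was done for $\phi$ and $\psi$; splitting the summation index into residue classes, as in the even/odd decomposition used in the $(2,4,4)$ case, may be needed to isolate the individual building blocks.

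Applying $\partial_v\theta_1(1,q)=\bi\,\eta(q)^3$ at the relevant arguments $q_{\mathrm d}^{8}=q^{1/3}$, $q_{\mathrm d}^{24}=q$ and $q_{\mathrm d}^{72}=q^{3}$, every coefficient of $w_x,w_y,w_z$ then becomes a fixed $\C$-linear combination of the three weight-$3/2$ forms $\eta(q^{1/3})^3$, $\eta(q)^3$ and $\eta(q^3)^3$. In the preceding subsection $\eta(q^{1/3})^3$ and $\eta(q^3)^3$ were shown to be modular forms of weight $3/2$ for $\Gamma(3)$ with a common multiplier system, while $\eta(q)^3$ is a weight-$3/2$ form for the full modular group and hence for $\Gamma(3)$. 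After checking that all three carry the same multiplier system — which holds because the Hauptmodul $\gamma$ is modular with trivial multiplier, exactly as in the potential case — every $\C$-linear combination of them is again a weight-$3/2$ modular form for $\Gamma(3)$. This yields the modularity of the coefficients of $w_x,w_y,w_z$ and proves the statement.

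The step I expect to be the genuine obstacle is the translation of the raw strip-count series of \cite{CHKL} into $\theta_1$-derivatives. Unlike $\phi$ and $\psi$, each of which comes from a single family of triangles and reduces to one theta derivative, the off-diagonal coefficients (the $yz$-, $xz$- and $xy$-terms) collect contributions from several families of strips, so the exponents do not immediately complete to a single square; the combinatorial bookkeeping required to regroup them into the three $\eta$-building blocks, and to confirm that the blocks all enter with the same weight and multiplier so that no quasi-modular $E_2$-type correction survives, is where the care lies. The rest is a direct application of the modularity already established for $\eta(q^{1/3})^3$ and $\eta(q^3)^3$.
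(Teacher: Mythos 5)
Your framing is right and mostly follows the paper's route: the paper likewise treats the construction of $\delta$ as given (quoting the series for $w_x,w_y,w_z$ from \cite{CHKL}), relegates $x\,w_x+y\,w_y+z\,w_z=W$ to a remark, identifies the $x^2$-, $y^2$-, $z^2$-coefficients with $\phi(q_{\mathrm{d}})$, symmetrizes the off-diagonal sums over $k\in\Z$, and concludes from the $\Gamma(3)$-modularity already established for $\phi,\psi$. But at exactly the step you flagged as the genuine obstacle, your proposed resolution is wrong: the off-diagonal coefficients do \emph{not} reduce to theta derivatives, i.e.\ to $\C$-linear combinations of the weight-$3/2$ forms $\eta(q^{1/3})^3,\eta(q)^3,\eta(q^3)^3$. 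Note that the statement you are proving already contains $\eta(q)$ to the \emph{first} power, contradicting your claimed list of building blocks. After symmetrization the $yz$-coefficient of $w_x$ is $\sum_{k\in\Z}(-1)^{k+1}(2k+1)\,q_{\mathrm{d}}^{(6k+1)^2}$, and the linear weight splits as $2k+1=\tfrac{1}{3}(6k+1)+\tfrac{2}{3}$: the $\tfrac{1}{3}(6k+1)$ part reassembles into $\tfrac{1}{3}\psi(q_{\mathrm{d}})$, but the constant residue leaves the sum $\sum_{k\in\Z}(-1)^{k+1}q_{\mathrm{d}}^{(6k+1)^2}$, which has \emph{no} linear factor and hence is a theta constant, not a value of $\partial_v\theta_1$ at $v=1$. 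The paper identifies it as $-\eta(q)$ via Euler's pentagonal-number identity
\begin{equation*}
\eta(q)=q^{\frac{1}{24}}\sum_{k=-\infty}^{\infty}(-1)^k q^{\frac{3k^2-k}{2}}\,,
\end{equation*}
whose exponents become $(6k\mp 1)^2$ in $q_{\mathrm{d}}=q^{1/24}$. Similarly the $xz$- and $xy$-coefficients use $2k=\tfrac{1}{3}(6k+1)-\tfrac{1}{3}$, giving $\tfrac{1}{3}\psi(q)+\tfrac{1}{3}\eta(q)$.

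So no completion of the square can express these sums through $\partial_v\theta_1$ alone; the weight-$1/2$ block $\eta(q)$ is unavoidable, and your concluding assertion that every coefficient is a combination of weight-$3/2$ forms carrying one common multiplier — together with the check that ``the blocks all enter with the same weight'' — would fail rather than confirm the computation. Once the split into $\tfrac{1}{3}\psi\mp\mathrm{const}\cdot\eta(q)$ is made, the modularity conclusion does go through essentially as you argue, now invoking the modularity of $\phi,\psi$ for $\Gamma(3)$ together with the classical modularity (with its eta multiplier) of $\eta$ itself.
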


\begin{proof}
From the result of \cite[Section 7.7]{CHKL}, the matrix factorization is $(M,\delta)$ defined above where
\begin{align*}
w_x =&   x^2 \sum_{k=0}^{\infty}  (-1)^{k+1}(2k+1) q_{\mathrm{d}}^{(3(2k+1))^2} \\
&+yz \left( -q_{\mathrm{d}} + \sum_{k=1}^{\infty} (-1)^{k+1} \left(  (2k+1) q_{\mathrm{d}}^{(6k+1)^2} - (2k -1) q_{\mathrm{d}}^{(6k-1)^2} \right) \right), \\
w_y =& y^2 \sum_{k=0}^{\infty}  (-1)^{k+1} (2k+1) q_{\mathrm{d}}^{(3(2k+1))^2} + xz \sum_{k=1}^{\infty}  (-1)^{k+1} \left(  2k q_{\mathrm{d}}^{(6k+1)^2} -  2k q_{\mathrm{d}}^{(6k-1)^2} \right), \\
w_z =& z^2 \sum_{k=0}^{\infty}  (-1)^{k+1} (2k+1) q_{\mathrm{d}}^{(3(2k+1))^2} +  xy \sum_{k=1}^{\infty}  (-1)^{k+1} \left(  2k q_{\mathrm{d}}^{(6k+1)^2} -  2k q_{\mathrm{d}}^{(6k-1)^2} \right). 
\end{align*}
The coefficient of $x^2$ in $w_x$ (or that of $y^2$ in $w_y$, or that of $z^2$ in $w_z$) equals to $\phi(q_{\mathrm{d}}) = \bi \partial_v \theta_1 (q^3,1)$.  The coefficient of $yz$ in $w_x$ is
\begin{align}
&-q_{\mathrm{d}} + \sum_{k=1}^{\infty} (-1)^{k+1} \left(  (2k+1) q_{\mathrm{d}}^{(6k+1)^2} - (2k-1) q_{\mathrm{d}}^{(6k-1)^2} \right)  \nonumber\\
=& \sum_{k=-\infty}^\infty (-1)^{k+1} (2k+1) q_{\mathrm{d}}^{(6k+1)^2} \nonumber\\
=& \frac{1}{3} \sum_{k=-\infty}^\infty (-1)^{k+1} (6k+3) q_{\mathrm{d}}^{(6k+1)^2}\nonumber \\
=& \frac{\psi(q_{\mathrm{d}})}{3} + \frac{2}{3} \sum_{k=-\infty}^\infty (-1)^{k+1} q_{\mathrm{d}}^{(6k+1)^2}\nonumber\\
=& \frac{1}{3} \psi(q_{\mathrm{d}}) - \frac{2}{3} \eta(q_{\mathrm{d}}^{24})\,,
\end{align}
where we have used the identity that
\begin{equation*}
\eta(q) = q^{\frac{1}{24}} \sum_{k=-\infty}^\infty (-1)^k q^{\frac{3k^2-k}{2}}\,.
\end{equation*}
Written in terms of the parameter $q$, this is
\begin{equation}
{1\over 3}\psi(q)-{2\over 3}\eta(q)\,.
\end{equation}
The coefficient of $xz$ in $w_y$ (or that of $xy$ in $w_z$) is
\begin{align}
&\sum_{k=1}^{\infty}  (-1)^{k+1} \left(  2k q_{\mathrm{d}}^{(6k+1)^2} -  2k q_{\mathrm{d}}^{(6k-1)^2} \right)  \nonumber\\
=& \sum_{k=-\infty}^{\infty} (-1)^{k+1} (2k) \cdot q_{\mathrm{d}}^{(6k+1)^2} \nonumber\\
=& \frac{1}{3} \sum_{k=-\infty}^{\infty} (-1)^{k+1} (6k+1) q_{\mathrm{d}}^{(6k+1)^2} - \frac{1}{3} \sum_{k=-\infty}^{\infty} (-1)^{k+1} q_{\mathrm{d}}^{(6k+1)^2} \nonumber\\
=& \frac{1}{3} \psi(q_{\mathrm{d}})+ \frac{1}{3} \eta(q_{\mathrm{d}}^{24}).
\end{align}
Written in terms of the parameter $q$, this is
\begin{equation}
{1\over 3}\psi(q)+{1\over 3}\eta(q)\,.
\end{equation}
All mentioned earlier in Section \ref{sectionGWpotentials}, both $\phi,\psi$ are modular forms with respect to $\Gamma(3)$, hence all the coefficients studied here are modular forms, and they have the explicit expressions as stated in the theorem.
\end{proof}

\begin{remark}
It is easy to check that $x w_x + y w_y + z w_z = W$ by straightforward calculation.
\end{remark}

\subsection{$(2,4,4)$ case}

\begin{thm} \label{thm:MF244}
The matrix factorization mirror to the Seidel Lagrangian of $\bP^1_{2,4,4}$ is $M=(\largewedge^* \C^3, \delta)$ where
$$ \delta = (x X + y Y + z Z) \wedge \cdot + w_x \iota_X + w_y \iota_Y + w_z \iota_Z, $$
and $w_x, w_y, w_z$ are the following polynomials whose coefficients are modular forms (up to a multiple by a power of $q$):
\begin{align*}
w_x &= q^{\frac{3}{16}} x - q^{\frac{1}{32}} yz,\\
w_y &= \left(\frac{1}{8} q^{-{1\over 8}}\cdot C_{2}^2(q)\right) y^3 + \left( \frac{q^{-{1\over 8}}}{8} \left( 1 - A_{2}^2(q) \right) \right) yz^2,\\
w_z &= \left(\frac{1}{8} q^{-{1\over 8}}\cdot C_{2}^2(q)\right) z^3 + \left( \frac{q^{-{1\over 8}}}{8} \left( 1 - A_{2}^2(q) \right) \right) y^2z.
\end{align*}
\end{thm}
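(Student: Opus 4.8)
The plan is to mirror the argument used for the $(3,3,3)$ case in Theorem \ref{thm:MF333}: start from the explicit strip-count expressions for $w_x, w_y, w_z$ recorded in \cite{CHKL}, resum the resulting $q_{\mathrm{d}}$-series into the modular forms already obtained in Section \ref{secevenodd}, and finally confirm the matrix factorization identity. First I would quote from \cite{CHKL} the raw form of $\delta = (xX + yY + zZ)\wedge\cdot + w_x\iota_X + w_y\iota_Y + w_z\iota_Z$, where each $w$ is a polynomial in $x,y,z$ whose coefficients are power series in $q_{\mathrm{d}}$ counting pseudoholomorphic strips with one side on the Seidel Lagrangian with its Floer deformation $b$ and the other on the test Lagrangian.

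The coefficient of $x$ in $w_x$ and of $yz$ in $w_x$ can be read off directly as $q_{\mathrm{d}}^6 = q^{3/16}$ and $-q_{\mathrm{d}} = -q^{1/32}$ respectively, using $q = q_{\mathrm{d}}^{32}$. For the remaining coefficients, the key observation is that the strip counts reproduce, up to the symmetric splitting forced by the $y \leftrightarrow z$ exchange symmetry of the potential, exactly the polygon-count series $d_y = d_z$ and $d_{yz}$ of $W$ in Theorem \ref{thm:W244}. Concretely, the coefficient of $y^3$ in $w_y$ (resp.\ $z^3$ in $w_z$) equals $d_y$, while the coefficient of $yz^2$ in $w_y$ (resp.\ $y^2z$ in $w_z$) equals $\tfrac12 d_{yz}$; the relation $w_y(y,z)=w_z(z,y)$ makes the symmetric split manifest. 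I would then invoke the resummations of Section \ref{secevenodd}, namely $d_y(q) = \tfrac18 q^{-1/8} C_2^2(q)$ and $d_{yz}(q) = q^{-1/8}(\tfrac14 - \tfrac14 A_2^2(q))$, to obtain the closed expressions in the statement.

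Modularity then follows immediately from Section \ref{sectionringofmodularforms}: since $A_2^2 = A_4^2 + C_4^2$ and $C_2^2 = 2A_4C_4$ lie in $M_*(\Gamma(4))$, every coefficient appearing in $w_x, w_y, w_z$ is, up to the overall factor $q^{-1/8}$ (and the inessential constant in $1 - A_2^2$, which is removed by the same rescaling of $x,y,z$ used for the potential in Section \ref{secevenodd}), a modular form for $\Gamma(4)$. As a consistency check establishing that $\delta$ is genuinely a matrix factorization, I would verify $x w_x + y w_y + z w_z = W$: one computes $x w_x = q_{\mathrm{d}}^6 x^2 - q_{\mathrm{d}} xyz$ and $y w_y + z w_z = d_y(y^4 + z^4) + d_{yz}\, y^2 z^2$, which together recover $W$ of Theorem \ref{thm:W244}, so that $\delta^2 = W\cdot \mathrm{Id}$.

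The main obstacle is the middle step: correctly matching the strip-count series of \cite{CHKL} for the $yz^2$ and $y^2z$ coefficients with the polygon-count series $d_{yz}$. Unlike the $(3,3,3)$ case, where the matrix factorization coefficients acquired genuine $\eta(q)$-corrections beyond the naive split of the $xyz$ term, here the $y\leftrightarrow z$ symmetry should force a clean split into halves; verifying from the raw data that no analogous correction term survives is the delicate point, after which the closed modular expressions follow at once from the potential computation.
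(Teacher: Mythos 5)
Your proposal is correct and follows essentially the same route as the paper's proof: the paper likewise quotes the raw strip-count series from \cite{CHKL}, identifies the $y^3$/$z^3$ coefficients with $d_y$ and the $yz^2$/$y^2z$ coefficients with $d_{yz}/2$ (the clean split you flag as the delicate point does hold, since the $-(2r+2s-1)q_{\mathrm{d}}^{16(2r-1)2s-4}$ parts are literally half of the corresponding part of $d_{yz}$ and the $2r\,q_{\mathrm{d}}^{64rs-4}$, $2s\,q_{\mathrm{d}}^{64rs-4}$ sums each equal half of the $(2r+2s)q_{\mathrm{d}}^{64rs-4}$ sum by relabeling $r\leftrightarrow s$), and then cites the $\Gamma(4)$-modularity of $d_y$ and $d_{yz}$ already established in Section \ref{secevenodd}. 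Your concluding verification $x w_x + y w_y + z w_z = W$ is not spelled out in the paper's $(2,4,4)$ proof but matches the remark the paper makes after the $(3,3,3)$ case, so it is a harmless and correct addition.
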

\begin{proof}
It is a direct computation as in \cite[Section 7.7]{CHKL} that the mirror matrix factorization is $(M,\delta)$ defined above, where
\begin{align*}
w_x =& q_{\mathrm{d}}^6 x - q_{\mathrm{d}} yz\,,\\
w_y =& \left(\sum_{0 \le r} (2r+1) q_{\mathrm{d}}^{16 (2r+1)^2-4}
+\sum_{0\le r < s} (2r+2s+2) q_{\mathrm{d}}^{16 (2r+1)(2s+1)-4}\right) y^3 \\
&+ \left(\sum_{r\ge1, s\ge1} \big(- (2r+2s-1) q_{\mathrm{d}}^{16(2r-1)2s-4} +  2r q_{\mathrm{d}}^{64rs-4} \big)\right) yz^2\,,\\
w_z =& \left(\sum_{0 \le r} (2r+1) q_{\mathrm{d}}^{16 (2r+1)^2-4}
+\sum_{0\le r < s} (2r+2s+2) q_{\mathrm{d}}^{16 (2r+1)(2s+1)-4}\right) z^3 \\
&+ \left(\sum_{r\ge1, s\ge1} \big(- (2r+2s-1) q_{\mathrm{d}}^{16(2r-1)2s-4} +  2s q_{\mathrm{d}}^{64rs-4} \big)\right) y^2z\,.
\end{align*}
The coefficient of $y^3$ in $w_y$ (or that of $z^3$ in $w_z$) is nothing but $d_y$ studied in Section \ref{sectionGWpotentials}, while the coefficient of $yz^2$ in $w_y$ (or that of $y^2z$ in $w_z$) is $d_{yz}/2$. They have been shown to be modular forms with respect to $\Gamma(4)$ in the previous section.
\end{proof}

\subsection{$(2,3,6)$ case}
\label{secmf236}

Similarly, we can directly compute the matrix factorization mirror to the Seidel Lagrangian of $\bP^1_{2,3,6}$.  The result is $(M=\largewedge^* \C^3, \delta)$, where
$$ \delta = (x X + y Y + z Z) \wedge \cdot + w_x \iota_X + w_y \iota_Y + w_z \iota_Z, $$
and $w_x, w_y, w_z$ are defined by
\begin{align*}
w_x =& q_{\mathrm{d}}^6 x - q_{\mathrm{d}} yz,\\
w_y =& c_y(q_{\mathrm{d}}) y^2 + y z^2 \sum_{a,b\ge 0} \big(
(-1)^{b}(2a+4b+5) q_{\mathrm{d}}^{48A(a+b,a,0,0)-4} + (2b+2) q_{\mathrm{d}}^{48 A(a+b,a,b,0)- 4} \big) \\
&+ z^4 \sum_{a,b\ge0, n\ge a+b} (-1)^{n-a-b} (2n-2a+2) q_{\mathrm{d}}^{48 A(n,a,b,0)-17},\\
w_z =& c_z(q_{\mathrm{d}}) z^5 + y^2 z \sum_{a,b\ge 0} \big(
(-1)^{b}(2a+2b+3) q_{\mathrm{d}}^{48A(a+b,a,0,0)-4} + (2a+2) q_{\mathrm{d}}^{48 A(a+b,a,b,0)- 4} \big) \\
&+ yz^3 \sum_{a,b\ge0, n\ge a+b} (-1)^{n-a-b} (4n-2b+5) q_{\mathrm{d}}^{48 A(n,a,b,0)-17},
\end{align*}
and $A(n,a,b,c)$, $c_y$ and $c_z$ are given in \eqref{eq:A}, \eqref{eq:cy} and \eqref{eq:cz} respectively.  \\

The sum of coefficients for the $yz^{2},y^{2}z$ terms of $(M,\delta)$ gives the one for $y^{2}z^{2}$ in $W$, similarly for $z^{4},yz^{3}$ terms.
Recall that 
\begin{equation*}
q_{\mathrm{d}}^{48}=q\,,\quad A(n,a,b,c) =  {n+2\choose 2} - {a+1 \choose 2} - {b+1 \choose 2} - {c+1 \choose 2}\,.
\end{equation*} 
By pulling out $q_{\mathrm{d}}^{-4}$ for the first parts in the $yz^{2}, y^{2}z$ terms, we get
\begin{eqnarray}
&&\sum_{a,b\geq 0} (-1)^{b} (4b+2a+5)q^{{1\over 2} (b+1))b+1+2a+1)}\,,\label{eq2361}\\
&&\sum_{a,b\geq 0} (-1)^{b} (2b+2a+3)q^{{1\over 2} (b+1)(b+1+2a+1)}\,.\label{eq2362}
\end{eqnarray}
The following quantity is easily computed:
\begin{equation}\label{eq2363}
\sum_{a,b\geq 0} (-1)^{b}(2a+1)q^{{1\over 2} (b+1)(b+1+2a+1)}={1\over 24}(1-E_{2}(q))\,.
\end{equation}
More precisely, we have 
\begin{eqnarray*}
&&\sum_{a,b\geq 0} (-1)^{b}(2a+1)q^{{1\over 2} (b+1)(b+1+2a+1)}\\
&=& \sum_{
\substack{k\geq 1,\,l\geq k,\\
 l=k+\mathrm{odd}}} (-1)^{k-1}(l-k)q^{{1\over 2} kl}\\
&=& \sum_{
\substack{k\geq 1,\,l\geq k,\\
 l=k+\mathrm{odd}}} ((-1)^k k+(-1)^l l)q^{{1\over 2} kl}\\
&=& \sum_{
\substack{k,l\geq 1,\\
 l=k+\mathrm{odd}}} (-1)^k k q^{{1\over 2} kl}\\
&=& \sum_{
\substack{k,\,l\geq 1,\\
 k=\mathrm{odd},\\ l=\mathrm{even}}} (-1)^k k q^{{1\over 2} kl}
+
\sum_{
\substack{k,\,l\geq 1,\\
 k=\mathrm{even},\\ l=\mathrm{odd}}} (-1)^k k q^{{1\over 2} kl}
\\
&=&- \sum_{
\substack{k,\,l\geq 1,\\
 k=\mathrm{odd},\\ l=\mathrm{even}}}  k q^{{1\over 2} kl}
+
\sum_{
\substack{k,\,l\geq 1,\\
 k=\mathrm{even},\\ l=\mathrm{odd}}}  k q^{{1\over 2} kl}
\,.
\end{eqnarray*}
Then the statement follows from the summations we computed in Section \ref{secevenodd}.

Comparing \eqref{eq2361}, \eqref{eq2362} with \eqref{eq2363}, we can see what is left is to calculate
\begin{equation*}
\sum_{a,b\geq 0} (-1)^{b}(b+1)q^{{1\over 2} (b+1)(b+1+2a+1)}\,.
\end{equation*}
This can be simplified further as follows (changing the variable $b+1$ to $k$)
\begin{equation}\label{eq2364}
\sum_{k\geq 1,a\geq 0} (-1)^{k-1}kq^{{1\over 2} k(k+2a+1)}=\sum_{k\geq 1} (-1)^{k-1}k {q^{{ 1 \over 2}( k^{2}+k) }\over 1-q^{k}}\,.
\end{equation}
It is related to the derivative of the Appell function of level one.
The other terms involving $2b+2,2a+2$ in the $yz^{2}, y^{2}z$ terms can be calculated due to symmetry and the result for $W$, both are equal to $q^{-{1\over 12}}(1-E_{2}(q))/12$.
For the coefficient of $z^{4}$ in $w_{y}$ and that of $yz^{3}$ in $w_{z}$, we need to compute (by pulling out $q_{\mathrm{d}}^{-17}$, using $q_{\mathrm{d}}^{48}=q$ and defining $k=n-a-b$)
\begin{eqnarray*}
&&\sum_{k,a,b\geq 0} (-1)^{k}(2k+2b+2)q^{1+a+b+a b+\frac{3 k}{2}+a k+b k+\frac{k^2}{2} }\,,\\
&&\sum_{k,a,b\geq 0} (-1)^{k} (4k+4a+2b+5)q^{1+a+b+a b+\frac{3 k}{2}+a k+b k+\frac{k^2}{2} }\,.
\end{eqnarray*}
Taking the difference of the above two formulas, and simplifying a little further, we are left with
\begin{eqnarray}
&&\sum_{k,a,b\geq 0} (-1)^{k}(2a+1)q^{1+a+b+a b+\frac{3 k}{2}+a k+b k+\frac{k^2}{2} }\,, \label{eq2365} \\
&&\sum_{k,a,b\geq 0} (-1)^{k} (2k+1)q^{1+a+b+a b+\frac{3 k}{2}+a k+b k+\frac{k^2}{2} }\,. \label{eq2366}
\end{eqnarray}
We expect that all the quantities in \eqref{eq2364}, \eqref{eq2365}, \eqref{eq2366} are quasi-modular forms (up to a multiple of a power of $q$) for $\Gamma(6)$ with possibly non-trivial multiplier systems. This would then imply that the coefficients in the matrix factorization $(M,\delta)$ for the $(2,3,6)$ case are modular. However, we are not able to prove this at this moment.\footnote{We are kindly informed by Kathrin Bringmann and Larry Rolen in a private communication that these summations are nice objects which are related to mock modular forms.}


\section{Mirror symmetry over global moduli}
\label{sectionexplanation}

In Section \ref{sectionGWpotentials} and Section \ref{sectionmatrixfactorizations} we proved that the potential $W$ and the matrix factorization $M$
are modular for some modular group $\Gamma$ which depends on the geometry, hence they extend automatically to be sections of holomorphic line bundles on the modular curves $\Gamma\backslash \mathcal{H}^{*}$. The proof is based on straightforward calculations.  In this section we explain why modularity is expected from the point of view of global mirror symmetry.

\subsection{LG/CY correspondence}

It is well-known that the elliptic curve is self-mirror.  This simple important fact can be obtained using group action and LG/CY correspondence as follows.

Given a symplectic torus $E$, we equip it with the complex structure with an automorphism group $G$, where $G = \Z_3,\Z_4$ or $\Z_6$.  Then $E/G = \bP^1_{3,3,3}, \bP^1_{2,4,4}$ or $\bP^1_{2,3,6}$ respectively.  By the mirror construction \cite{CHL} which is briefly explained in the beginning of Section \ref{sectionGWpotentials}, the Landau-Ginzburg mirror is the open Gromov-Witten potential $W$ defined on $\C^3$ whose explicit expressions are given in Theorem \ref{thm:W333}, \ref{thm:W244} or \ref{thm:W236} respectively. The potential $W$ is invariant under the action of the dual group $\check{G} \cong G$, and the mirror of $E$ is given by $(\C^3/\check{G}, W)$ \cite{Seidel:g=2,CHL}.  By LG/CY correspondence \cite{Orlov}, the complex geometry (so-called the B-model) of $(\C^3/\check{G}, W)$ is equivalent to that of the elliptic curve $\check{E} = \{W= 0\} \subset W\bP^2$, where $W\bP^2$ is the weighted projective space $(\C^3-\{0\}) / \C^\times$ and the $\C^\times$ action has weights $(1,1,1)$, $(1,2,2)$ and $(2,3,6)$ respectively.  This gives an explanation, which is different from the usual SYZ approach, of why the elliptic curve is self-mirror.

\begin{figure}[h]
\centering
   \renewcommand{\arraystretch}{1.2} 
\begin{displaymath}																		
\xymatrix{
\textrm{A-model} &&\ar@{->} [l] \textrm{mirror symmetry}\ar@{->}[r] && \textrm{B-model}\\
E/G  \ar@/_/  [ddrrrr]^{\hspace{30pt}\textrm{Open Gromov-Witten potential}}&& \ar@{->} [l] \textrm{mirror}\ar@{->}[r] &&  \check{E} =  \{W=0\} \subset W\bP^2 \\
  &&&&  \\
&&&& (\mathbb{C}^{3}/\check{G},W)
\ar[uu]_{\textrm{LG/CY correspondence}}
\\
}
\end{displaymath}	
   \caption[Chain of dualities]{Chain of dualities}
  \label{chainofdualities}																	
\end{figure}
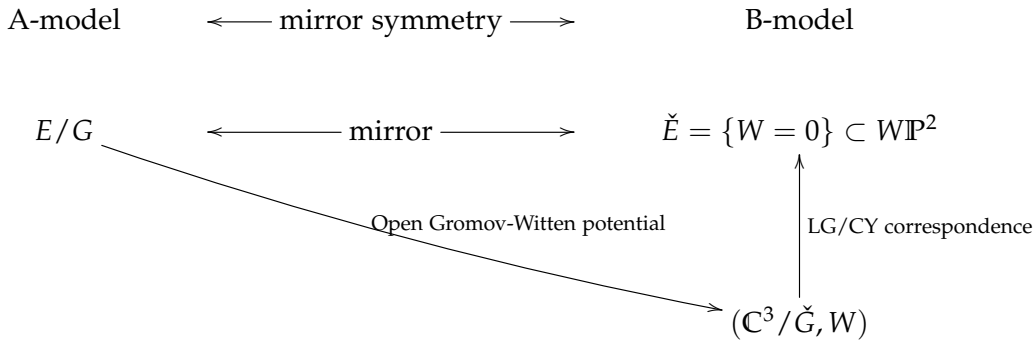

The moduli space of complex structures on $\check{E}$ is the (compactified) upper half plane quotient by $\mathrm{SL}(2,\Z)$.  By global mirror symmetry, the K\"ahler moduli of $E$ is also the upper half plane quotient by $\mathrm{SL}(2,\Z)$ (this can also be seen from considering the moduli space of Bridgeland stability conditions \cite{Bridgeland}).  The global mirror map in this case is simply given by the identity map.

On the other hand, the mirror elliptic curve family under consideration is given by the equation $W=0$, which is \emph{not} the universal family over the moduli stack $\mathrm{SL}(2,\mathbb{Z})\backslash \mathcal{H}^{*}$ of complex structures of the mirror elliptic curve.
This elliptic curve family is essentially (up to reparametrization and base change, as shown in Section \ref{sectionGWpotentials}) the elliptic curve families of type $E_{n}$ reviewed in Section \ref{sectionmodularforms}. Note that the base change would also alter the modular group for which the parameter $\sigma$ in the elliptic curve family in \eqref{eqmirrorcurvefamily} is a Hauptmodul.  Since the parameter for the base of the family $W=0$ is a modular function for certain modular group, one would expect that the coefficients in the equation $W=0$, as functions on the modular curve, are related to modular forms. For example, in the $\mathbb{P}^{1}_{3,3,3}$ case, the equation $W=0$ defines the universal family of elliptic curves over the modular curve $\Gamma(3)\backslash\mathcal{H}^{*}$, and the parameters $\phi,\psi$ are modular forms for $\Gamma(3)$.  The big picture is illustrated in Figure \ref{chainofdualities}.\\

Now in order to see more clearly why it is the modular subgroup $\Gamma$ instead of the full modular group $\mathrm{SL}(2,\mathbb{Z})$ that enters the picture, the main point is as follows.  We have fixed the Seidel Lagrangian $\mathcal{L} \subset E/G$ to define the open Gromov-Witten potential.  The Lagrangian $\mathcal{L}$ lifts to $r$ copies of Lagrangians $L_1,\ldots,L_r$ in $E$, where $r = 3,4,6$ respectively.  Thus the A-side moduli under consideration is \emph{the K\"ahler structure together with the markings by these $r$ Lagrangians}.  By homological mirror symmetry, the corresponding B-side moduli for the mirror is the complex structure on $\check{E}$ together with the coherent sheaves mirror to $L_1,\ldots,L_r$.  In the next subsection, we show that these sheaves give rise to a cyclic subgroup of order $r$ of the group of $r$-torsion points on $\check{E}$.  Thus the moduli space is given by the modular curve $X_{\Gamma}= \Gamma\backslash\mathcal{H}^{*}$ instead of $\mathrm{SL}(2,\mathbb{Z})\backslash \mathcal{H}^{*}$.

\subsection{T-duality}
\label{sectionAbelJacobi}

It is a standard fact that the modular curve $\Gamma_{0}(r)\backslash \mathcal{H}^{*}$ is the (coarse) moduli space of pairs
$(E, H)$, where $E$ is an elliptic curve and $H<E_{r}$ is a cyclic subgroup of order $r$ of the group of $r$-torsion points on $E$.

For simplicity, we focus on $\mathbb{P}^{1}_{3,3,3}$, and the other two cases are similar.  The Seidel Lagrangian in $\mathbb{P}^{1}_{3,3,3}$ lifts to three Lagrangian cycles in the elliptic curve $E_{\rho}$ with its automorphism group generated by the cube root of unity $\rho=\exp (2\pi i/3)$. They are denoted as $\{L,\rho L,\rho^{2}L\}$, with
\begin{equation}
[L]=A+2B\,, \quad [\rho L]=-2A-B\,,\quad [\rho^{2}L]=A-B\,,
\end{equation}
where $A,B \in H_1(E_{\rho},\mathbb{Z})$ are the generators corresponding to the lattice points $1$ and $\rho$ which give rise to the elliptic curve $E_{\rho}$, respectively.

We will use T-duality to transform $\{L,\rho L,\rho^{2}L\}$ to coherent sheaves on the mirror elliptic curve $\check{E}_{\rho}$.  T-duality and homological mirror symmetry for elliptic curves was well-studied, see for instance \cite{PZ}, and we include it here for completeness of the paper.

To avoid dealing with multi-sections, we consider the double cover $\tilde{E}_{\rho}$ of the elliptic curve $E_{\rho}$
with its corresponding lattice generated by $2, \rho$. The Lagrangians
$L_{1}=L,L_{2}=\rho L,L_{3}=\rho^{2}L$ lifts to Lagrangians $\tilde{L}_{1},\tilde{L}_{2},\tilde{L}_{3}$ in the double cover.
Take the generators of $H_{1}(\tilde{E}_{\rho})$ to be $\tilde{A},B$ corresponding to the lattice points $2, \rho$.
Then we have 
\begin{equation}
[\tilde{L}_{1}]=\tilde{A}+4B, [\tilde{L}_{2}]=\tilde{A}+B, [\tilde{L}_{3}]=\tilde{A}-2B\,.
\end{equation}
The intersections are
\begin{equation}
\tilde{L}_{1}\cap \tilde{L}_{2}=-3, \tilde{L}_{2}\cap \tilde{L}_{3}=-3, \tilde{L}_{3}\cap\tilde{L}_{1}=6\,.
\end{equation}
Let $s = \tilde{L}_{1}$ and $f=\tilde{B}=-3B$.  We then have
\begin{equation}
\tilde{L}_{1}=s,\tilde{L}_{2}=s+f,\tilde{L}_{3}=s+2f\,.
\end{equation}

Consider the elliptic curve $C$ whose lattice is generated by $2+4\rho, -3\rho$. Now $s$ and $f$ can be regarded as a section and a fiber of a Lagrangian fibration on this elliptic curve.  
By T-duality, they are mirror to the following sheaves on the mirror curve $\check{C}$:
$\mathcal{O}_{1}=\mathcal{O}, \mathcal{O}_{2}=\mathcal{O}(D),\mathcal{O}_{3}=\mathcal{O}(2D)$
where $D$ is the divisor of degree $1$ corresponding to the fiber class $f$ (equipped with trivial flat connection).  

The action which takes a Lagrangian section $s$ to $s+f$ corresponds to tensoring $\mathcal{O}(D)$ in the mirror curve $\check{C}$.  
The relation $\rho^3 = 1$ says the mirror $\mathbb{Z}_{3}$ action permutes $\mathcal{O},\mathcal{O}(D),\mathcal{O}(2D)$ cyclically.  
It follows that the sheaves give rise to a cyclic subgroup of order $3$ of the group of $3$-torsion points on the variety $\mathrm{Pic}^{0}(\check{C})$, which is isomorphic to the mirror elliptic curve $\check{C}$ itself.

To conclude, for the mirror side, we should consider the moduli space of complex structures of an elliptic curve decorated with a cyclic subgroup of order three of the group of $3$-torsion points on the elliptic curve.  Thus the global moduli is given by $\Gamma_{0}(3)\backslash \mathcal{H}^{*}$, and the open Gromov-Witten potential should be globally defined over $\Gamma_{0}(3)\backslash \mathcal{H}^{*}$. From previous sections we see that it is actually a global object over $\Gamma(3)\backslash \mathcal{H}^{*}$.

\subsection{One more example}

We now give one more example for which the global moduli space of K\"ahler structures can be identified with a modular curve and the 
generating functions of Gromov-Witten invariants are modular forms.

The mirror manifold of $K_{\bP^2}$ is a non-compact Calabi-Yau 3-fold $X$ given by \cite{Hori:2000kt}
\begin{equation}
\{uv = 1 + z + w +
\alpha / zw\} \subseteq \mathbb{C}^{2}_{u,v}\times ({\mathbb{C}^{\times}})^{2}_{z,w}\,,
\end{equation} 
and is a conic fibration over the base $({\mathbb{C}^{\times}})^{2}_{z,w}$.
The flat coordinate, denoted by $t(\alpha)$, for the threefold $X$
can be expressed in terms of the flat coordinate $\tau(\alpha)$ for
the corresponding elliptic curve $\{1+z+w+\alpha/zw=0\} \subset (\C^\times)^2_{z,w}$ which is the discriminant locus of the conic fibration.  

The idea is the following. On one hand, $\alpha(\tau)$ is automatically a modular form as it is the Hauptmodul for the modular curve $\Gamma_{0}(3)\backslash\mathcal{H}^{*}$ which parametrizes the elliptic curve family above, see \cite{Alim:2013eja}.  Thus it is a
tautology that $\alpha(t(\tau))$ is a modular form.  On the other
hand, in the A-model on  $K_{\bP^2}$, we know that $\alpha(t)$ is a generating function of open
Gromov-Witten invariants \cite{CLL}.  Therefore, we know that the generating function of
open Gromov-Witten invariants of $K_{\bP^2}$ is a modular form defined over the complexified K\"ahler
moduli space, which under mirror symmetry is identified with the modular curve $\Gamma_{0}(3)\backslash\mathcal{H}^{*}$ parametrizing the mirror manifolds of $K_{\bP^2}$.

The details are given as follows. The SYZ mirror Calabi-Yau 3-fold $X$ for $K_{\mathbb{P}^{2}}$ is given by \cite{CLL}
\begin{equation}
w_{1}w_{2}=1+\delta(q)+z_{1}+z_{2}+{q\over z_{1}z_{2}}\,,
\end{equation}
with 
\begin{equation}\label{eqopeninvariantsKP2}
1+\delta(q)=\sum_{k=0}^{\infty} n_{k}q^{k}\,,
\end{equation} 
where
$q=q_{t}:=\exp 2\pi i t$, $t$ is the flat coordinate on the complexified K\"ahler
moduli space of $K_{\mathbb{P}^{2}}$.
Then the mirror curve is given by $1+\delta(q)+z_{1}+z_{2}+{q\over
z_{1}z_{2}}=0$. A scaling on the coordinates shows that this
curve is equivalent to
\begin{equation}\label{eqeqnforzSYZ}
1+z_{1}+z_{2}+{z\over z_{1}z_{2}}=0\,,\quad z={q_{t}\over
(1+\delta(q_{t}))^{3}}\,.
\end{equation}
Now consider $z$ as the complex structure modulus for the mirror curve.
It is a standard fact that this elliptic curve family is 3-isogenous to the $\tilde{E}_{6}$ curve family in Section \ref{sectiongeometricmoduli} and thus is parametrized by the modular curve $\Gamma_{0}(3)\backslash \mathcal{H}^{*}$.
Furthermore, one has
\begin{equation}\label{eqeqnforzmodular}
z(\tau) =-{\alpha(\tau)\over 27}=-{1\over 27} {({3 \eta(3\tau)^{3}\over
\eta(\tau)})^{3}  \over  ({3 \eta(3\tau)^{3}\over \eta(\tau)})^{3}
+({ \eta(\tau)^{3}\over \eta(3\tau)})^{3}   }\,.
\end{equation}
The relation between the modular variable $q_{\tau}:=\exp 2\pi i
\tau$ and the flat coordinate $t$ is given by \cite{Mohri:2000kf, Stienstra:2005wy, Zhou:2014thesis},
\begin{equation}
q_{\tau}=(-q_{t})\prod_{d\geq 1}(1-q_{t}^{d})^{3d^{2}n^{\mathrm{GV}}_{0,d}}\,, \quad 
q_{t}=(-q_{\tau}) \prod_{n\geq 1} (1-q_{\tau}^{n})^{9n\chi_{-3}(n)}\,.
\end{equation}
where $n^{\mathrm{GV}}_{0,d}=3,-6,27,-192,1695\cdots$ are the genus $0$ degree $d$
Gopakumar-Vafa invariants \cite{Gopakumar:1998ii, Gopakumar:1998jq, Katz:1999xq}, and $\chi_{-3}(n)$ is the non-trivial Dirichlet character mod $3$ (it takes the value $0,1,-1$ on an integer $3k,3k+1,3k+2$, respectively).
From the above formulas in \eqref{eqeqnforzSYZ}, \eqref{eqeqnforzmodular} for the same quantity $z$, one then has
\begin{equation}
1+\delta(q_{t})=(-27)^{1\over 3}q_{t}^{1\over 3}  \alpha(q_{\tau})^{-{1\over
3}}=(-27)^{1\over 3}q_{t}^{1\over 3}  \alpha(q_{\tau}(q_{t}))^{-{1\over 3}}\,.
\end{equation}
The first few constants $\{n_{k}\}_{k\geq 0}=\{1,-2,5,-32,286,-3038,35870\cdots \}$ predicted by using this formula and \eqref{eqopeninvariantsKP2} give exactly the open Gromov-Witten invariants computed by a direct counting as done in \cite{CLL}. That is, the generating function $1+\delta(q_{t})$, up to
multiplication by the factor $q_{t}^{1/ 3}$, is a modular
form in $q_{\tau}$.


\providecommand{\bysame}{\leavevmode\hbox to3em{\hrulefill}\thinspace}
\providecommand{\MR}{\relax\ifhmode\unskip\space\fi MR }
\providecommand{\MRhref}[2]{%
  \href{http://www.ams.org/mathscinet-getitem?mr=#1}{#2}
}
\providecommand{\href}[2]{#2}

\bigskip{}

\noindent{\small Department of Mathematics, Harvard University, One Oxford Street, Cambridge, MA 02138, USA}

\noindent{\small E-mail: \tt s.lau@math.harvard.edu}

\medskip{}
\noindent{\small Perimeter Institute for Theoretical Physics, 31 Caroline Street North, Waterloo, ON N2L 2Y5, Canada}

\noindent{\small E-mail: \tt jzhou@perimeterinstitute.ca}

\end{document}